\documentclass[10pt]{amsart}
\usepackage{amssymb,amsfonts}
\usepackage[all,arc]{xy}
\usepackage{enumitem}
\usepackage{mathrsfs}
\usepackage{ stmaryrd }
\usepackage{url}

\usepackage{fullpage}

\usepackage
[pdfauthor={Preston Wake and Carl Wang Erickson},
 pdftitle={Ordinary Pseudorepresentations and Modular Forms},
 bookmarks=false]
{hyperref}

\newcounter{qcounter}

\newcommand\define{\newcommand}

\define\isoto{\xrightarrow{\sim}}
\define\onto{\twoheadrightarrow}

\DeclareMathOperator{\Spec}{Spec}
\define\Ch{\mathrm{Char}_\Lambda}

\newcommand{\dia}[1]{{\langle #1 \rangle}}

\newcommand{\ttmat}[4]{\left( \begin{array}{cc}
#1 & #2 \\
#3 & #4
\end{array}
\right)}

\newcommand{\Z}{\mathbb{Z}}
\newcommand{\Q}{\mathbb{Q}}

\newcommand{\F}{\mathbb{F}}

\newcommand{\fH}{\mathfrak{H}}
\newcommand{\h}{\mathfrak{h}}

\newcommand{\sO}{\mathcal{O}}

\newcommand{\Lam}{\Lambda}
\newcommand{\I}{\mathcal{I}}
\newcommand{\p}{\mathfrak{p}}
\newcommand{\m}{\mathfrak{m}}

\newcommand{\cJ}{\mathcal{J}}

\newcommand{\chii}{{\chi^{-1}}}

\newcommand{\Hom}{\mathrm{Hom}}
\newcommand{\Gal}{\mathrm{Gal}}

\newcommand{\Aut}{\mathrm{Aut}}

\newcommand{\Ext}{\mathrm{Ext}}

\newcommand{\End}{\mathrm{End}}

\newcommand{\X}{\mathfrak{X}}

\newcommand{\lb}{{[\![}}
\newcommand{\rb}{{]\!]}}

\newcommand{\red}{\mathrm{red}}

\define\ord{{\mathrm{ord}}}
\define\Cl{{\mathrm{Cl}}}
\define\GL{{\mathrm{GL}}}
\define\kcyc{\kappa_{\mathrm{cyc}}}

\define{\Fitt}{\mathrm{Fitt}}
\define{\Ann}{\mathrm{Ann}}

\define\RGammac{{\mathrm{R}\Gamma_{(c)}}}
\define\RGamma{{\mathrm{R}\Gamma}}
\define\RHom{{\mathrm{RHom}}}

\define\Hc{H_{(c)}}

\newtheorem{thm}{Theorem}[subsection] 
\newtheorem*{thm*}{Theorem}

\newtheorem{cor}[thm]{Corollary}
\newtheorem{prop}[thm]{Proposition}
\newtheorem{lem}[thm]{Lemma}

\theoremstyle{definition}
\newtheorem{defn}[thm]{Definition}

\newtheorem{eg}[thm]{Example}

\theoremstyle{remark}
\newtheorem{rem}[thm]{Remark}

\newcommand{\ra}{\rightarrow}

\newcommand{\lra}{\longrightarrow}
\newcommand{\lrisom}{\buildrel\sim\over\lra}
\newcommand{\risom}{\buildrel\sim\over\ra}
\newcommand{\rinj}{\hookrightarrow}

\newcommand{\rsurj}{\twoheadrightarrow}

\newcommand{\bF}{\mathbb{F}}
\newcommand{\bQ}{\mathbb{Q}}
\newcommand{\bZ}{\mathbb{Z}}

\newcommand{\cO}{\mathcal{O}}
\newcommand{\Db}{{\bar D}}

\usepackage{color}

\newcommand{\lr}{{\langle-\rangle}}
\newcommand{\tr}{{\mathrm{tr}}}

\newcommand{\sm}[4]{\ensuremath{\big(\begin{smallmatrix}#1 & #2 \\ #3 & #4\end{smallmatrix}\big)}}

\newcommand{\GMA}{{\mathrm{GMA}}}


\makeatletter
\let\c@equation\c@thm
\makeatother
\numberwithin{equation}{subsection}

\title{Ordinary Pseudorepresentations and Modular Forms}

\author{Preston Wake}
\address{UCLA Mathematics Department \\
Box 951555 \\
Los Angeles, CA 90095-1555 }
\email{wake@math.ucla.edu}

\author{Carl Wang Erickson}
\address{Department of Mathematics, Imperial College London \\
	London SW7 2AZ, UK}
\email{c.wang-erickson@imperial.ac.uk}

\setcounter{tocdepth}{1}

\begin{document}

\maketitle

\begin{abstract}
In this short note, we observe that the techniques of \cite{WWE2015} can be used to provide a new proof of some of the residually reducible modularity lifting results of Skinner and Wiles \cite{SW1999}. In these cases, we have found that a deformation ring of ordinary pseudorepresentations is equal to the Eisenstein local component of a Hida Hecke algebra. We also show that Vandiver's conjecture implies Sharifi's conjecture. 
\end{abstract}

\tableofcontents

\section{Introduction}
The key technical innovation behind our previous work \cite{WWE2015} was our definition of an \emph{ordinary 2-dimensional pseudorepresentation} of $G_\bQ = \Gal(\bar \bQ/\bQ)$. Using this notion, we were able to study ordinary Galois deformations in the case where the residual representation is reducible. In particular, we constructed a universal ordinary pseudodeformation ring $R^\ord_\Db$ with residual pseudorepresentation $\Db$. We also showed that the Galois action on the Eisenstein part of the cohomology of modular curves gives rise to an ordinary pseudorepresentation valued in the Eisenstein component $\fH$ of the ordinary Hecke algebra. Studying deformations of ordinary pseudorepresentations, we showed that, if Greenberg's conjecture holds, then certain characteristic $0$ localizations $\fH_\p$ of $\fH$ are Gorenstein. Under the same assumption, we also proved an isomorphism $(R^\ord_\Db)_\p \isoto \fH_\p$. 

In this note, we show that the methods of \cite{WWE2015} can be extended to the whole Eisenstein component $\fH$, provided that we make stronger assumptions on class groups. Namely, we have to assume that the plus-part $X^+$ of the Iwasawa class group of the relevant cyclotomic field vanishes. When the tame level $N$ is $1$, then this is known as Vandiver's conjecture. When $N>1$, there are examples where $X^+ \ne 0$, but it is still often the case that $X^+=0$.
Assuming $X^+=0$, we get an isomorphism $R^\ord_\Db \risom \fH$ (Theorem \ref{thm:R=T}). As a consequence, we have a new technique to establish the residually reducible ordinary modularity theorem of Skinner and Wiles \cite{SW1999} over $\bQ$, in some cases (Theorem \ref{thm: our sw}). We also derive new results on Gorensteinness of Hecke algebras (Corollary \ref{cor: principal}) and prove new results toward Sharifi's conjecture (Corollary \ref{cor: sharifi}). In particular, we prove that $\fH$ is Gorenstein when $X^+ = 0$, an implication that was known previously only after assuming Sharifi's conjecture \cite{wake1}. Previous partial results in this direction by Skinner-Wiles \cite{SW1997} and Ohta \cite{ohta2005} require much stronger conditions on class groups.

As well as proving these new results, we review the most novel parts of \cite{WWE2015}. In this way, this note may be serve as an introduction to \cite{WWE2015}. 

\subsection{Ordinary pseudorepresentations} 
A $2$-dimensional pseudorepresentation of $G_\Q$ with values in a ring $A$ is the data of two functions $\{\tr, \det\}$ that satisfy conditions as if they were the trace and determinant of a representation $G_\Q \to \GL_2(A)$. The (fine) moduli of pseudorepresentations may be thought of as the coarse moduli of Galois representations produced by geometric invariant theory \cite[Thm.\ A]{WE2015}. In this respect, our results suggest that coarse moduli rings of Galois representations are most naturally comparable with Hecke algebras. Indeed, most previous $R=T$ theorems have been established where $R$ is a deformation ring for a residually irreducible Galois representation, in which case the fine and coarse moduli of Galois representations are identical.

The ordinary condition is somewhat subtle when applied to pseudorepresentations. For example, if one thinks about the case when $A$ is a field, a representation $\rho: G_\bQ \ra \GL_2(A)$ is defined to be ordinary when $\rho\vert_{G_{\Q_p}}$ is reducible with a twist-unramified quotient. While $\{\tr \rho \vert_{G_{\Q_p}}, \det \rho\vert_{G_{\Q_p}}\}$  knows nothing about which of the two Jordan-H\"older factors is the quotient, $D_\rho = \{\tr\rho, \det\rho\}$ can often distinguish them. This allows for the definition of an ordinary pseudorepresentation of $G_\bQ$, which we extend to non-field coefficients. We overview this and other background from \cite{WWE2015} in \S\S2-3. 

\subsection{Outline of the proof} The \'etale cohomology of compactified modular curves defines a $G_\bQ$-module $H$ over the cuspidal quotient $\h$ of $\fH$. However, $H$ is a representation (i.e.~locally free $\h$-module) if and only if $\h$ is Gorenstein, which is not always true. Nonetheless, $H$ always induces an ordinary $\h$-valued pseudorepresentation deforming the residual pseudorepresentation $\Db$. This pseudorepresentation extends to $\fH$, resulting in a surjection $R^\ord_\Db \rsurj \fH$. 

This map is naturally a morphism of augmented $\Lambda$-algebras, where $\Lambda$ is an Iwasawa algebra. The augmentation ideals
\[
\I := \ker(\fH \rsurj \Lambda), \qquad  \cJ := \ker(R^\ord_\Db \rsurj \Lambda)
\]
correspond to the Eisenstein family of $\Lambda$-adic modular forms and the reducible locus of Galois representations, respectively. We can show that certain Iwasawa class groups surject onto $\cJ/\cJ^2$, which is the cotangent module relative to the reducible family. The Vandiver-type assumption $X^+=0$ is used to show that one of the relevant Iwasawa class groups is cyclic. Using a version of Wiles's numerical criterion \cite[Appendix]{wiles1995}, with the class groups playing the role of Wiles's $\eta$, we can show that this forces $R^\ord_\Db \onto \fH$ to be an isomorphism.

One novel aspect of this proof is that we are able to control a ``cotangent space'' $\cJ/\cJ^2$ of a pseudodeformation ring in terms of Galois cohomology. Moreover, we use Galois cohomology groups with coefficients in $\Lambda$, while the usual approach works over a field. Such control is critical to proving $R=T$ theorems in the residually irreducible case, and $R=T$ theorems for pseudorepresentations were lacking because this control was not as available. In our situation, the relevant Galois cohomology is determined by class groups. 

\subsection{Acknowledgements} The authors would like to thank Romyar Sharifi and Eric Urban for helpful conversations. We also thank the referee for many helpful comments and corrections. Both authors would like to recognize the Simons Foundation for support in the form of AMS-Simons travel grants. Preston Wake was supported by the National Science Foundation under the Mathematical Sciences Postdoctoral Research Fellowship No.~1606255.

\section{Background: Iwasawa theory and Hecke algebras}

This section is a brief synopsis of Sections 2, 3 and 6 of \cite{WWE2015}. We overview background information from Iwasawa theory and ordinary $\Lambda$-adic Hecke algebras and modular forms.

\subsection{Iwasawa algebra and Iwasawa modules} 
\label{subsec:iwasawa}
We review Section 2 of \cite{WWE2015}. 

Let $p \ge 5$ be a prime number, and let $N$ be an integer such that $p \nmid N\phi(N)$. Let
$$
\theta: (\Z/Np\Z)^\times \to \overline{\Q}_p^\times
$$
be an even character. Let $\chi=\omega^{-1}\theta$, where 
$$
\omega: (\Z/Np\Z)^\times \to (\Z/p\Z)^\times \to \Z_p^\times
$$
is the Teichm\"uller character. Our assumption on $N$ implies that each of these characters is a Teichm\"uller lift of a character valued in a field extension $\bF$ of $\bF_p$. By abuse of notation, we also use $\theta,\chi,\omega$ to refer to these characters. 

We assume that $\theta$ satisfies the following conditions:
\begin{enumerate}[label=(\alph*)]
\item $\theta$ is primitive,
\item if $\chi|_{(\Z/p\Z)^\times}=1$, then $\chi|_{(\Z/N\Z)^\times}(p) \ne 1$, and 
\item if $N=1$, then $\theta \ne \omega^2$.
\end{enumerate}
A subscript $\theta$ or $\chi$ on a module refers to the eigenspace for an action of $(\Z/Np\Z)^\times$. A superscript $\pm$ will denote the $\pm 1$-eigenspace for complex conjugation. Let $S$ denote the set of primes dividing $Np$ along with the infinite place, and let $G_{\Q,S}$ be the unramified outside $S$ Galois group. We fix a decomposition group $G_p \subset G_{\Q,S}$ and let $I_p\subset G_p$ denote the inertia subgroup. Let $\kcyc$ denote the $p$-adic cyclotomic character. 

Fix a system $(\zeta_{Np^r})$ of primitive $Np^r$-th roots of unity such that $\zeta_{Np^{r+1}}^p=\zeta_{Np^r}$ for all $r$. Let $\Q_\infty= \Q(\zeta_{Np^\infty})$ and let $\Gamma = \Gal(\Q_\infty/\Q(\zeta_{Np}))$.

Let $\Cl(\Q(\zeta_{Np^r}))$ be the class group, and let 
$$
X=\varprojlim \Cl(\Q(\zeta_{Np^r}))\{p\}.
$$
There is action of $\Gamma$ on $X$. By class field theory, $X=\Gal(L/\Q_\infty)$ where $L$ is the maximal pro-$p$, abelian, unramified extension of $\bQ_\infty$. A closely related object is $\X=\Gal(M/\Q_\infty)$ where $M$ is the maximal pro-$p$ abelian extension unramified outside $Np$.

Let $\Z_{p,N} = \varprojlim \Z/Np^r\Z$.  Let $\Lambda_\theta = \Z_p\lb\Z_{p,N}^\times\rb_\theta$. We write $\Lam$ for $\Lam_\theta$ when $\theta$ is implicit. This $\Lam$ is a local component of the semilocal ring $\Z_p\lb\Z_{p,N}^\times\rb$ and is abstractly isomorphic to $\sO\lb\Gamma\rb \simeq \sO\lb T\rb$, where $\cO$ is the extension of $\bZ_p$ generated by the values of $\theta$. Notice that the action on $(\zeta_{Np^r})$ gives an isomorphism $\Gamma \simeq \ker(\Z_p^\times \to (\Z/p\Z)^\times)$. 

Let $M \mapsto M^\#$ and $M \mapsto M(r)$ be the functors on $\Z_p\lb\Z_{p,N}^\times\rb$-modules as defined in \cite[\S2.1.3]{wake2}. Namely, $M^\#=M(r)=M$ as $\Z_p$-modules, but $\gamma \in \Z_{p,N}^\times$ acts on $M^\#$ (resp. $M(r)$) as $\gamma^{-1}$ (resp. $\kcyc^r(\gamma)\gamma$) acts on $M$. We sometimes, especially when using duality, are forced to consider $\Z_p\lb\Z_{p,N}^\times\rb$-modules with characters other than $\theta$, but we use these functors to make the actions factor through $\Lam$ so we can treat all modules uniformly.

We define $\xi_\chi \in \Lam$ to be a generator of the principal ideal $\Ch(X_\chi(1))$. By the Iwasawa Main Conjecture, it may be chosen to be a power series associated to a Kubota-Leopoldt $p$-adic $L$-function.

Consider the $\Lambda$-valued character $\dia{-}:G_{\Q,S} \onto \Gamma \subset \Lambda^\times$, where $G_{\Q,S} \onto \Gamma$ is the quotient map. We define $\Lambda[G_{\Q,S}]$-modules $\Lambda^\dia{-}$ and $\Lambda^\#$ to be $\Lambda$ with $G_{\Q,S}$ acting by $\dia{-}$ and $\dia{-}^{-1}$, respectively. 

\subsection{Duality and consequences}  
\label{subsec: duality}
We review some relevant parts of Section 6 of \cite{WWE2015}. To compare conditions on various class groups, we use the following $\Lambda$-adic version of Poitou-Tate duality. It is a generalization of \cite[Prop.\ 6.2.1]{WWE2015}.

Here, $K$ is a number field, and $U$ is an open dense subset of $\Spec(O_K[1/p])$. The compactly supported cohomology $H_{(c)}^\bullet(U,-)$ is defined to be the cohomology of
\[
\mathrm{Cone}\left(C^\bullet(U,-) \to \bigoplus_{v \not \in U} C^\bullet(K_v,-)\right)
\]
where $C^\bullet(U,-)$ and $C^\bullet(K_v,-)$ are the standard complexes that compute Galois cohomology.

\begin{prop}\label{prop:Lambda Poitou-Tate duality}
Let $T$ be a finitely generated projective $\Lambda$-module equipped with a continuous action of $G_K$, unramified at places outside $U$, and let $M$ be a finitely generated $\Lam$-module. Then there is a quasi-isomorphism
\begin{equation*}
\RGammac(U,T \otimes_\Lam M) \lrisom \RHom_\Lambda(\RGamma(U,T^*(1)),M)[-3]
\end{equation*}
that is functorial in $M$. There is a similar quasi-isomorphism when $\RGammac$ and $\RGamma$ are swapped, i.e.
\begin{equation*}
\RGamma(U,T \otimes_\Lam M) \lrisom \RHom_\Lambda(\RGammac(U,T^*(1)),M)[-3].
\end{equation*}
Here $T^*$ is the dual representation $\Hom(T,\Lambda)$.
\end{prop}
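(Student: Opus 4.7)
The plan is to reduce from general $M$ to the case $M=\Lambda$, where the statement is a $\Lambda$-adic Poitou--Tate duality in the spirit of \cite[Prop.\ 6.2.1]{WWE2015}, and then bootstrap via standard homological algebra.

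First, I would check that both functors
\[
M \mapsto \RGammac(U, T\otimes_\Lambda M), \qquad M \mapsto \RHom_\Lambda(\RGamma(U,T^*(1)),M)[-3]
\]
extend to triangulated functors on the derived category of finitely generated $\Lambda$-modules and transform short exact sequences into distinguished triangles. Since $T$ is a finitely generated projective (hence flat) $\Lambda$-module, the continuous cochain complex $C^\bullet_{(c)}(U,T\otimes_\Lambda M)$ can be identified with $C^\bullet_{(c)}(U,T) \otimes_\Lambda M$; in particular the left-hand functor is just $\RGammac(U,T)\otimes^L_\Lambda M$. The right-hand functor is a composition of an $\RHom$ with a shift, so it is triangulated in $M$ for trivial reasons.

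Second, the comparison map itself comes from a global duality pairing
\[
\RGammac(U,T)\otimes^L_\Lambda \RGamma(U,T^*(1))\lra \Lambda[-3]
\]
obtained by combining the cup product in Galois cohomology with local Tate duality at the primes of $U^c$; tensoring with $M$ and taking the adjoint produces the morphism in the statement, and its naturality in $M$ is automatic from this construction. Since $\Lambda \simeq \sO\lb T\rb$ is a two-dimensional regular local ring, every finitely generated $\Lambda$-module $M$ admits a bounded resolution by finitely generated free modules. Because both sides are triangulated in $M$, verifying the map is a quasi-isomorphism reduces to the case $M=\Lambda$.

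Third, for $M=\Lambda$ the assertion is the $\Lambda$-adic Poitou--Tate duality
\[
\RGammac(U,T)\lrisom \RHom_\Lambda(\RGamma(U,T^*(1)),\Lambda)[-3],
\]
which is the content of the cited \cite[Prop.\ 6.2.1]{WWE2015} generalized from its original hypotheses. The way I would establish it is to filter $T$ by the powers of the maximal ideal $\fm\subset\Lambda$, apply the classical Poitou--Tate duality to each finite $G_K$-module $T/\fm^n T$ (after reducing further modulo a uniformizer of $\sO$ if needed), and pass to the inverse limit; Mittag--Leffler for $T$ projective ensures that the limit commutes with the relevant cohomology and $\RHom$. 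The swap version follows by applying the first quasi-isomorphism to $T^*(1)$ in place of $T$ and using the canonical isomorphism $(T^*(1))^*(1)\simeq T$.

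The main obstacle I expect is the careful bookkeeping in the $M=\Lambda$ base case: making the cup-product pairing canonical at the level of complexes and verifying that taking the inverse limit over $n$ preserves the duality quasi-isomorphism (rather than only its cohomology in each degree). The devissage from $\Lambda$ to general finitely generated $M$ is then formal, once functoriality of the comparison map is in place.
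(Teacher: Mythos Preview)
Your overall strategy coincides with the paper's: reduce to $M=\Lambda$ and combine that base case with a projection formula $\RGammac(U,T)\otimes^{\mathbb L}_\Lambda M \simeq \RGammac(U,T\otimes_\Lambda M)$. The paper simply cites Nekov\'a\v{r} \cite[Prop.~5.4.3]{nekovar2006} or Fukaya--Kato \cite[\S1.6.12]{FK2012} for $M=\Lambda$, and Lim--Sharifi \cite[Prop.~3.1.3]{LS2013} for the projection formula, whereas you sketch both; and the paper's d\'evissage moves the $\otimes^{\mathbb L}M$ across an $\RHom(-,\Lambda)$ using perfectness of $\RGamma(U,T^*(1))$, while you resolve $M$ by finite frees. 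These are cosmetic differences.

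There is, however, one genuine slip. The cochain-level identification
\[
C^\bullet_{(c)}(U,T\otimes_\Lambda M)\;\cong\;C^\bullet_{(c)}(U,T)\otimes_\Lambda M
\]
is not justified by flatness of $T$: the issue is on the other side of the tensor. The modules $C^n_{(c)}(U,T)$ of continuous cochains are not finitely generated over $\Lambda$, so $(-)\otimes_\Lambda M$ need not commute with forming them; for instance, continuous maps $G^n\to M$ need not lie in the image of $C^n(G,\Lambda)\otimes_\Lambda M$. What you actually have is a natural comparison map $C^\bullet_{(c)}(U,T)\otimes_\Lambda M \to C^\bullet_{(c)}(U,T\otimes_\Lambda M)$, and the correct claim is that it is a \emph{quasi}-isomorphism. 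That derived statement is exactly what the paper outsources to \cite{LS2013}; alternatively, it follows from the very same free-resolution d\'evissage you invoke later (both sides send short exact sequences in $M$ to triangles, since $T$ is flat, and the map is visibly an isomorphism for $M$ finite free). So the fix is easy, but you should replace the cochain-level assertion by the derived one and either cite it or fold it into your d\'evissage.
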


\begin{proof}
We prove the first quasi-isomorphism. The proof of the second is similar.

For the case where $M = \Lam$, see \cite[Prop.\ 5.4.3, pg.\ 99]{nekovar2006} or \cite[\S1.6.12]{FK2012}. Then we have quasi-isomorphisms
\begin{align*}
\RGammac(U,T)\otimes^\mathbb{L}_\Lam M & \isoto \RHom_\Lambda(\RGamma(U,T^*(1)),\Lambda)[-3] \otimes^\mathbb{L}_\Lam M \\
& \isoto \RHom_\Lambda(\RGamma(U,T^*(1)),M)[-3]
\end{align*}
where the first comes from the $M = \Lam$ case, and the second is standard (for example \cite[Exer.\ 10.8.3]{weibel1994}). To prove the proposition, we are reduced to producing a quasi-isomorphism
\begin{equation}
\label{eq: tensor in and out}
\RGammac(U,T)\otimes^\mathbb{L}_\Lam M \isoto \RGammac(U,T \otimes_\Lam M).
\end{equation}
This follows from \cite[Proposition 3.1.3]{LS2013} (and its compactly supported analog, which, as remarked in the proof of Proposition 4.1.1 of \emph{loc.~ cit.}, can be established similarly).
\end{proof}

The proposition yields spectral sequences with second page 
\begin{equation}
\label{eq: spectral Rc=RHom(R)}
E_2^{i,j}=\Ext_\Lam^i(H^{3-j}(U,T^*(1)), M) \implies \Hc^{i+j}(U,T \otimes_\Lam M),
\end{equation}
\begin{equation}
\label{eq: R = RHom(Rc)}
E_2^{i,j}=\Ext_\Lam^i(\Hc^{3-j}(U,T^*(1)), M) \implies H^{i+j}(U,T \otimes_\Lam M).
\end{equation}
These spectral sequences are functorial in $M$.

We record the influence of the assumption that $X_\theta = 0$. In the proof, we make use of the following lemma on the structure of $\Lambda$-modules.

\begin{lem}
\label{lem: Lambda-modules}
Let $M$ be a finitely generated $\Lambda$-module. Say that $M$ is type 0 if $M$ is free, type 1 if $M$ is torsion and has projective dimension $1$, and type 2 if $M$ is finite. Then $M$ is type $i$ if and only if $\Ext^j_\Lambda(M,\Lambda)=0$ for all $j \ne i$. Moreover, $M$ is type 1 if and only if $M$ is torsion and has no non-zero finite submodule.
\end{lem}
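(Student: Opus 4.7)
The plan is to exploit the fact that $\Lambda \cong \cO\lb T \rb$ is a two-dimensional regular local ring, hence Gorenstein of global dimension $2$: every finitely generated $\Lambda$-module admits a minimal free resolution of length at most $2$, and the Auslander--Buchsbaum formula reads $\mathrm{pd}(M) = 2 - \mathrm{depth}(M)$. In particular $\Ext^j(M,\Lambda) = 0$ for all $j > 2$, so only $j \in \{0,1,2\}$ are at issue.

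I would begin by observing that $\Hom_\Lambda(M,\Lambda) = 0$ iff $M$ is torsion: the forward direction uses that $\Lambda$ is a domain, while for the reverse a non-torsion $M$ fits into $M \otimes_\Lambda \Frac(\Lambda) \cong \Frac(\Lambda)^r$ with $r \ge 1$, and clearing denominators produces a nonzero map $M \to \Lambda$. Next, from a minimal free resolution
\[
0 \to F_2 \to F_1 \to F_0 \to M \to 0,
\]
the vanishing of $\Ext^2(M,\Lambda) = \coker(F_1^* \to F_2^*)$ forces $F_1^* \onto F_2^*$ to split, so $F_2 \hookrightarrow F_1$ is a split injection and $\coker(F_2 \to F_1)$ is free, contradicting minimality unless $\mathrm{pd}(M) \le 1$. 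By Auslander--Buchsbaum this is equivalent to $\mathrm{depth}(M) \ge 1$, i.e.~$\m \notin \mathrm{Ass}(M)$, meaning $M$ has no nonzero finite submodule; combined with the torsion criterion this yields the type~1 characterization and the ``moreover'' assertion.

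Applying the same splitting argument to the shorter resolution $0 \to F_1 \to F_0 \to M \to 0$ available once $\mathrm{pd}(M) \le 1$ shows that $\Ext^1(M,\Lambda) = 0$ forces $F_1 \hookrightarrow F_0$ to split, making $M$ a free direct summand of $F_0$; together with $\Ext^2 = 0$ this delivers the type~0 characterization. For type~2, I would take $T \subset M$ to be the maximal finite submodule: a Koszul computation at $T = k$ extended by d\'evissage gives $\Ext^j(T,\Lambda) = 0$ for $j \le 1$, so $\Ext^1(M,\Lambda) \cong \Ext^1(M/T,\Lambda)$, and it remains to argue $M/T = 0$.

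The main obstacle is the claim that a nonzero torsion $\Lambda$-module $N$ of projective dimension $\le 1$ has $\Ext^1(N,\Lambda) \ne 0$. For such an $N$ the resolution $0 \to F_1 \to F_0 \to N \to 0$ satisfies $\mathrm{rank}(F_0) = \mathrm{rank}(F_1)$ (else $N$ would have positive generic rank), so if $\Ext^1(N,\Lambda) = \coker(F_0^* \to F_1^*)$ vanished, $F_0^* \to F_1^*$ would be a surjection of free modules of equal rank, hence an isomorphism, and dualizing would force $N = 0$. Applied to $N = M/T$, this finishes the type~2 equivalence, and the remaining implications then fall out by assembling these pieces.
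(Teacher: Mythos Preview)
Your argument is correct and complete. The paper itself does not prove this lemma at all: it simply cites \cite[\S3]{jannsen1989}. You have instead supplied a self-contained proof using standard commutative algebra for the two-dimensional regular local ring $\Lambda \cong \cO\lb T\rb$: the Auslander--Buchsbaum formula to translate $\Ext^2$-vanishing into $\mathrm{depth}\geq 1$ (hence no finite submodule), the minimal-resolution splitting trick to deduce freeness from $\Ext^1=\Ext^2=0$, and the Koszul/d\'evissage computation of $\Ext^j(T,\Lambda)$ for finite $T$ to handle type~2.

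One small remark: your ``main obstacle'' paragraph, while correct, is slightly redundant. Once you know $M/T$ has no finite submodule you get $\Ext^2_\Lambda(M/T,\Lambda)=0$, and combined with $\Ext^1_\Lambda(M/T,\Lambda)=0$ your type~0 argument already forces $M/T$ free; since $M/T$ is also torsion (being a quotient of the torsion module $M$), it must vanish. So the separate rank-counting argument for $N=M/T$ is not strictly needed, though it does no harm.
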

\begin{proof}
See \cite[\S3]{jannsen1989}.
\end{proof}

The following is well-known to experts.

\begin{prop}
\label{prop:X_control}
$X_\theta = 0$ if and only if $\X_\chii^\#(1)$ is a free $\Lambda$-module of rank 1.
\end{prop}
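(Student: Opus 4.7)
The plan is to translate ``$\X_\chii^\#(1)$ is free of rank $1$ over $\Lambda$'' into an Ext-vanishing statement via Lemma \ref{lem: Lambda-modules}, and then connect that vanishing to $X_\theta = 0$ through Poitou-Tate duality (Proposition \ref{prop:Lambda Poitou-Tate duality}). First I would fix the Galois-cohomological dictionary: using Shapiro's lemma applied to the cyclotomic tower, Kummer theory, and the identification $H^2(U,\Z_p(1)) \cong \Pic(U)\{p\}$, one can realize (in the appropriate eigencomponent) $\X_\chii^\#(1)$ as an $H^2$ on $U = \Spec\Z[1/Np]$ with coefficients in a $\Lambda$-adic Tate twist, while $X_\theta$ arises similarly from an $H^2$ over the smaller open $\Spec\Z[1/p]$. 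The passage between the two base schemes is a tame contribution from primes $v \mid N$, which by standard local computation is controlled by decomposition data at those primes and, under our running hypotheses on $\theta$ and $\chi$, is accounted for by the eigencomponent.

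Next I would apply Proposition \ref{prop:Lambda Poitou-Tate duality} with $M = \Lambda$ and $T$ chosen so that $T^*(1)$ is the coefficient module realizing $\X_\chii^\#(1)$. This yields the spectral sequence (\ref{eq: spectral Rc=RHom(R)}), whose $E_2$-page is $\Ext^i_\Lambda(\X_\chii^\#(1),\Lambda)$ and which converges to $\Hc^{i+j}(U,T)$. By Lemma \ref{lem: Lambda-modules}, $\X_\chii^\#(1)$ is free (type $0$) if and only if $\Ext^j_\Lambda(\X_\chii^\#(1),\Lambda) = 0$ for every $j \geq 1$. Unwinding the spectral sequence, these vanishings are equivalent to vanishings of compactly-supported cohomology groups, which, after the Step 1 dictionary, amount precisely to the vanishing of $X_\theta$. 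The rank-one statement is automatic from the weak Leopoldt conjecture (known in this cyclotomic setting), which identifies the generic $\Lambda$-rank of $\X_\chii^\#(1)$ with $1$; once it is $\Lambda$-projective and finitely generated this forces freeness of rank $1$.

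The main obstacle I anticipate is bookkeeping of eigencomponents and twists: ensuring that the operations $(\cdot)_\chii$, $(\cdot)^\#$ and the Tate twist $(1)$ interact correctly with Poitou-Tate duality, so that the module appearing on the Ext-side of the spectral sequence sits in precisely the $\theta$-eigencomponent that records $X_\theta$, and not a neighboring one. A secondary obstacle is verifying that the tame local cohomology at primes $v \mid N$ does not contaminate either direction of the equivalence, using the hypothesis (b) on $\theta$ to rule out any nontrivial invariants in the wrong eigencomponent. Once these identifications are fixed, the remainder of the argument is formal: it is a direct combination of Lemma \ref{lem: Lambda-modules} with the spectral sequence (\ref{eq: spectral Rc=RHom(R)}).
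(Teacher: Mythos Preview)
Your overall strategy is the paper's: identify both modules as Galois cohomology groups, feed them into the duality spectral sequence, and invoke Lemma~\ref{lem: Lambda-modules}. Two points of execution need correcting, though.

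First, you have the wrong spectral sequence. The natural cohomological realization of $\X_\chii^\#(1)$ is as \emph{compactly supported} cohomology: from the description of $\X$ as the Pontryagin dual of $H^1(\Z[1/Np],\Q_p/\Z_p)$ and classical Poitou--Tate one gets $\X_\chii^\#(1)\cong H^2_{(c)}(\Z[1/Np],\Lambda^{\dia{-}})$. There is no clean realization of $\X_\chii^\#(1)$ as an ordinary $H^2$, so it cannot appear on the $E_2$-page of~\eqref{eq: spectral Rc=RHom(R)} in the way you describe. You want the other spectral sequence~\eqref{eq: R = RHom(Rc)}, with $T$ chosen so that $T^*(1)=\Lambda^{\dia{-}}$; then $H^2_{(c)}(U,T^*(1))=\X_\chii^\#(1)$ sits on the Ext side and the abutment is $H^\bullet(\Z[1/Np],\Lambda^\#(1))$, whose $H^2$ is exactly $X_\theta$. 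One reads off $\Ext^1_\Lambda(\X_\chii^\#(1),\Lambda)\cong X_\theta$ and $\Ext^2_\Lambda(\X_\chii^\#(1),\Lambda)=0$, and Lemma~\ref{lem: Lambda-modules} finishes the freeness equivalence.

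Second, your anticipated obstacle about passing between $\Spec\Z[1/p]$ and $\Spec\Z[1/Np]$ is self-imposed: both identifications $X_\theta\cong H^2(\Z[1/Np],\Lambda^\#(1))$ and $\X_\chii^\#(1)\cong H^2_{(c)}(\Z[1/Np],\Lambda^{\dia{-}})$ live over $\Z[1/Np]$, so no tame comparison is needed. For the rank, the paper uses the class-field-theory exact sequence $0\to U_\chii^\#(1)\to\X_\chii^\#(1)\to X_\chii^\#(1)\to 0$ together with Iwasawa's theorem that $U_\chii^\#(1)$ is $\Lambda$-free of rank one and the fact that $X_\chii^\#(1)$ is torsion; your appeal to weak Leopoldt is essentially the same input packaged differently.
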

\begin{proof}
As in \cite[Cor.\ 6.3.1]{WWE2015}, we have $X_\theta=H^2(\Z[1/Np],\Lambda^\#(1))$. Since $\X$ is the Pontryagin dual of $H^1(\Z[1/Np],\Q_p/\Z_p)$, (classical) Poitou-Tate duality implies that
 \[ 
 \X = \varprojlim \Hc^2(\Z[1/Np, \zeta_{Np^r}],\Z_p(1)).
\]
We can then deduce $\X_\chii^\#(1)=H^2_{(c)}(\Z[1/Np],\Lambda^\dia{-})$, as in \cite[Cor.\ 6.1.3]{WWE2015}. 

Analyzing spectral sequence \eqref{eq: R = RHom(Rc)} above with $T=\Lambda^\dia{-}$ and $M=\Lambda$, we see that $E_2^{i,j}=0$ for cohomological dimension reasons unless $i,j \in \{0,1,2\}$. We find
\[
\Ext^1_\Lam(\X_\chii^\#(1), \Lam) = X_\theta,\qquad \Ext^2_\Lam(\X_\chii^\#(1), \Lam) = 0.
\] 
Then $\X_\chii^\#(1)$ is a free $\Lambda$-module if and only if $X_\theta = 0$ by Lemma \ref{lem: Lambda-modules}.

The fact that the rank is then 1 follows from class field theory and Iwasawa's theorem. Indeed, class field theory implies that there is an exact sequence
\[
0 \to U_\chii^\#(1) \to \X_\chii^\#(1) \to X_\chii^\#(1) \to 0,
\]
where $U$ is an Iwasawa local unit group at $p$, and Iwasawa's theorem implies that $U_\chii^\#(1)$ is free of rank 1 over $\Lambda$ (see \cite[\S2.1]{WWE2015} and the references given there -- note that there is no contribution from the local units at primes dividing $N$ because $\theta$ is primitive). Since $X_\chii^\#(1)$ is $\Lambda$-torsion, this implies that $\X_\chii^\#(1)$ has rank $1$.
\end{proof}

\subsection{Hecke algebras} We review Section 3 of \cite{WWE2015}. Let 
\[
\tilde{H}'=\varprojlim H^1(Y_1(Np^r),\Z_p)_\theta^\ord, \ H'=\varprojlim H^1(X_1(Np^r),\Z_p)_\theta^\ord
\]
where the subscript $\theta$ denotes the eigenspace for the diamond operators. Let $\fH'$ and $\h'$ denote the Hida Hecke algebras acting on $\tilde{H}'$ and $H'$, respectively. There is a unique maximal ideal of $\fH'$ containing the Eisenstein ideal for $\theta$; let $\fH$ and $\h$ be the localizations of $\fH'$ and $\h'$ at the Eisenstein maximal ideal, and let $\tilde{H}=\tilde{H}' \otimes_{\fH'} \fH$ and $H = H' \otimes_{\h'} \h$. Let $\I \subset \fH$ be the Eisenstein ideal, and let $I \subset \h$ be the image of $\I$.

By Hida theory, each of $\tilde{H}$, $H$, $\fH$ and $\h$ is finite and flat over $\Lambda$. There are also canonical isomorphisms of $\fH$-modules $\fH/\I \cong \Lambda$, $\h/I \cong \Lambda/\xi_\chi$ and $\I \cong I$ (see \cite[Prop.\ 3.2.5]{WWE2015}), making $\fH$ an augmented $\Lambda$-algebra.

\section{Ordinary Pseudorepresentations}
\label{sec:OP}

We define ordinary pseudorepresentations and show that they are representable by an ordinary pseudodeformation ring $R^\ord_\Db$, recapping results of \cite{WWE2015}. In particular, we will review background on pseudorepresentations, Cayley-Hamilton algebras, and generalized matrix algebras from \S5 of \emph{loc.~cit.}  

We highlight the following important points:
\begin{itemize}[leftmargin=2em]
\item The definition is ``not local,'' in the sense that it does not have the form  ``$D: G_{\Q,S} \to A$ is ordinary if $D|_{G_{p}}$ is ordinary.'' 
\item When $A$ is a field, we can say that $D$ is ordinary if there exists an ordinary $G_{\bQ,S}$-representation $\rho$ such that $D$ is induced by $\rho$. 
\item While not every pseudorepresentation comes from a representation, we fix this problem by broadening the category of representations to include generalized matrix algebra-valued representations (\emph{GMA representations}). We first define ordinary GMA representations, and then say a pseudorepresentation is ordinary when there exists an ordinary GMA representation inducing it.
\end{itemize}

We fix some notation.  We use the letter $\psi$ to denote the functor that associates to a representation its induced pseudorepresentation. Let $\Db = \psi(\omega^{-1} \oplus \theta^{-1})$, which is the $\bF$-valued residual pseudorepresentation induced by the Galois action on $H$. Write $R_\Db$ for the pseudodeformation ring for $\Db$ \cite[\S5.4]{WWE2015} with universal object $D^u_\Db: G_{\bQ,S} \ra R_\Db$. In this section, $A$ will denote a Noetherian local $W(\bF)$-algebra with residue field $\F$. If $a \in A$, then $\bar{a} \in \F$ denotes the image of $a$.

\subsection{Representations valued in generalized matrix algebras} 

As in \cite[\S1.3]{BC2009}, we say that a \emph{generalized matrix algebra over $A$} is an associative $A$-algebra $E$ such that 
\[
E \lrisom \begin{pmatrix}
A & B \\ C & A\end{pmatrix}.
\]
This means that we fix an isomorphism $E \risom A \oplus B \oplus C \oplus A$ as $A$-modules for some $A$-modules $B$ and $C$, and there is an $A$-linear map $B \otimes_A C \ra A$ such that the multiplication in $E$ is given by 2-by-2 matrix multiplication. In this case, $A$ is called the \emph{scalar subring} of $E$ and $E$ is called an $A$-GMA. 

A \emph{GMA representation} with coefficients in $A$ and residual pseudorepresentation $\Db$ is a homomorphism $\rho: G_{\Q,S} \to E^\times$, such that $E$ is an $A$-GMA, and such that in matrix coordinates, $\rho$ is given by
\begin{equation}
\label{eq: GMA defm}
\sigma \mapsto \begin{pmatrix}
\rho_{1,1}(\sigma) & \rho_{1,2}(\sigma) \\ \rho_{2,1}(\sigma) & \rho_{2,2}(\sigma)\end{pmatrix}
\end{equation}
with $\overline{\rho_{1,1}(\sigma)}=\omega^{-1}(\sigma)$, $\overline{\rho_{2,2}(\sigma)}=\theta^{-1}(\sigma)$, and $\overline{\rho_{1,2}(\sigma)\rho_{2,1}(\sigma)}=0$. We emphasize the fact that we fix the order of the diagonal characters (this is slightly non-standard but will simplify our notation later).  

Given such a $\rho$, there is an induced $A$-valued pseudorepresentation, denoted $\psi_{\GMA}(\rho): G_{\bQ,S} \ra A$, given by $\tr(\rho)=\rho_{1,1}+\rho_{2,2}$ and $\det(\rho)=\rho_{1,1}\rho_{2,2}-\rho_{1,2}\rho_{2,1}$. 

\subsection{Universality} 
\label{subsec:universality}
A \emph{Cayley-Hamilton representation} over $A$ with residual pseudorepresentation $\Db$ is the data of a pair $(\rho: G_{\Q,S} \to E^\times, D:E \to A)$, where $E$ is an associative $A$-algebra such that $D \circ \rho$ is a pseudorepresentation deforming $\Db$. These data must satisfy an additional Cayley-Hamilton condition that, for all $x \in E$, $x$ must satisfy the characteristic polynomial associated to $x$ by $D$. If $\rho: G_{\Q,S} \to E^\times$ is a GMA representation, then $(\rho, \psi_{\GMA}(\rho))$ is a Cayley-Hamilton representation.

For our purposes, the important properties of Cayley-Hamilton representations are the following (see \cite[Prop.\ 3.2.2]{WE2015}). 
\begin{itemize}[leftmargin=2em]
\item There is a universal Cayley-Hamilton representation $(\rho^u: G_{\bQ,S} \ra E_\Db^\times, D^u: E_\Db \to R_\Db)$ with residual pseudorepresentation $\Db$, and the induced pseudorepresentation $D^u \circ \rho^u$ of $G$ is equal to the universal deformation of $\Db$. 
\item $E_\Db$ is finite as an $R_\Db$-module, and $\rho^u$ is continuous for the natural adic topology from $R_\Db$ on $E_\Db$. 
\item $E_\Db$ admits a unique $R_\Db$-GMA structure such that $\rho^u$ is a continuous GMA representation over $R_\Db$. 
\end{itemize}
In particular, any Cayley-Hamilton representation with residual pseudorepresentation $\Db$ is a GMA representation with residual pseudorepresentation $\Db$.

We will write 
\begin{equation}
\label{eq:CH-GMA}
E_\Db \cong \begin{pmatrix} R_\Db & B^u \\ C^u & R_\Db\end{pmatrix}, 
\end{equation}
for the decomposition of $E_\Db$ as in \eqref{eq: GMA defm}, and write $\rho_{i,j}^u$ for the corresponding coordinates of $\rho^u$. Similarly, for any GMA representation $\rho$ deforming $\Db$, we will write $\rho_{i,j}$ for the induced coordinate decomposition. 

\subsection{Reducibility}
\label{subsec:reducibility}
It will be important to understand the notion of a \emph{reducible pseudorepresentation} and the \emph{reducibility ideal} in $A$ for an $A$-valued pseudodeformation $D: G_{\bQ,S} \ra A$ of $\Db$. We call $D$ \emph{reducible} if $D=\psi(\chi_1 \oplus \chi_2)$ for characters $\chi_i: G_{\bQ,S} \ra A^\times$ such that $\bar{\chi}_1=\omega^{-1}$ and $\bar{\chi}_2=\theta^{-1}$. Otherwise, $D$ is called \emph{irreducible}. Equivalently, $D$ is reducible if $D=\psi_{\GMA}(\rho)$ for some GMA representation $\rho$ with scalar ring $A$ such that $\rho_{1,2}(G_{\bQ,S}) \cdot \rho_{2,1}(G_{\bQ,S})$ is zero.

Since  $\rho^u_{1,2}(G_{\bQ,S})$ and $\rho^u_{2,1}(G_{\bQ,S})$ generate $B^u$ and $C^u$, respectively, as $R_\Db$-modules, $D$ is reducible exactly when the image of $B^u \otimes_{R_\Db} C^u$ in $R_\Db$ under multiplication vanishes under $R_\Db \ra A$. Consequently, we call the image of $B^u \otimes_{R_\Db} C^u$ in $R_\Db$ the \emph{reducibility ideal} of $R_\Db$, and its image in $A$ is the reducibility ideal for $D$. 

\subsection{Ordinary GMA representations}
\label{subsec:ord_GMA_reps}

We will say that a representation of $\Gal(\overline{\bQ}_p/\bQ_p)$ on a $2$-dimensional $p$-adic vector space $V$ is {\em ordinary} if there exists a $1$-dimensional quotient representation $V \onto W$ such that $W(1)$ is unramified. A representation $\rho$ of $G_{\Q,S}$ is {\em ordinary} if $\rho|_{G_p}$ is ordinary. This notion of ordinariness is relatively restrictive compared to other uses of the term, but it will suit our purpose of studying Galois representations associated to ordinary modular forms.

Relative to the ordering of factors in \eqref{eq: GMA defm}, we have the following definition of an ordinary GMA representation. 
\begin{defn}
\label{defn:ord_C-H}
Let $\rho: G_{\bQ,S} \ra E^\times$ be a GMA representation with scalar ring $A$ and induced pseudorepresentation $\Db$. We call $\rho$ \emph{ordinary} provided that  
\begin{enumerate}
\item $\rho_{1,2}(G_p) = 0$, and
\item $ \rho_{1,1}\vert_{I_p} \simeq \kcyc^{-1} \otimes_{\bZ_p} A$
\end{enumerate}
where $\kcyc$ is the $p$-adic cyclotomic character.
\end{defn}

\begin{rem}
\label{rem: condition (b) is needed}
The condition that $\omega$ and $\theta$ are locally $p$-distinguished, i.e.~$\omega\vert_{G_p} \neq \theta\vert_{G_p}$, is critically necessary to making this definition sensible. This is equivalent to the assumption (b) of \S\ref{subsec:iwasawa}. 
\end{rem}

\begin{eg}
\label{eg:ordinary representation}
Let $\rho: G_{\Q,S} \to GL_2(A)$ be a representation with induced pseudorepresentation $\Db$. Then $\rho$ is ordinary if and only if there is a quotient character $\alpha$ of $\rho|_{G_p}$ such that $\bar{\alpha}=\omega^{-1}$ and $\alpha|_{I_p}=(\kcyc^{-1} \otimes_{\Z_p} A)|_{I_p}$. Indeed, writing $V_\rho=A^2$ with $G_{\Q,S}$ acting via $\rho$, we see that $\rho$ induces an $A$-algebra homomorphism $E_\Db \to \End_A(V_\rho)$. Writing $\rho$ in terms of the resulting coordinate decomposition, we have
\[
\rho = \ttmat{\rho_{1,1}}{\rho_{1,2}}{\rho_{2,1}}{\rho_{2,2}}.
\]
We see that $\bar{\rho}_{1,1}=\omega^{-1}$, and that $\rho$ is ordinary if and only if $\alpha:=\rho_{1,1}|_{G_p}$ is a quotient of $\rho|_{G_p}$ satisfying $\alpha|_{I_p}=(\kcyc^{-1} \otimes_{\Z_p} A)|_{I_p}$.

This definition is slightly more restrictive than the definition of ``ordinary representation" given by some authors. Nonetheless, our definition can be useful for studying those more general representations (see \S \ref{subsec:Skinner-Wiles}).
\end{eg}

\begin{eg}
\label{eg:rho_H}
The motivating example of an ordinary GMA representation is the $\h[G_{\bQ,S}]$-module $H$ given by the cohomology of modular curves. There is an isomorphism of $\h$-modules $H = H^+ \oplus H^- \risom \h \oplus \h^\vee$, where $\h^\vee$ is the dualizing module of $\h$ (see \cite[\S3.4]{WWE2015}). Because $\End_\h(\h^\vee) \cong \h$, we get a GMA representation $\rho_H : G_{\bQ,S} \ra \Aut_\h(H)$; moreover, it is an ordinary GMA representation with residual pseudorepresentation $\Db$ when the components are ordered as follows:
\[
\End_\h(H) \cong 
\begin{pmatrix}
\End_\h(H^-) & \Hom_\h(H^+, H^-) \\
\Hom_\h(H^-, H^+) & \End_\h(H^+)
\end{pmatrix}
\simeq
\begin{pmatrix}
\h & \h^\vee \\
\Hom_\h(\h^\vee,\h) & \h
\end{pmatrix}.
\]
\end{eg}

Here is a summary of the results of \cite{WWE2015} on ordinary GMA representations.
\begin{prop}
\label{prop:universal ordinary C-H}
\begin{enumerate}
\item There is a \emph{universal ordinary GMA} $E^\ord_\Db$, a quotient of $E_\Db$, such that a GMA representation $G_{\bQ,S} \to E^\times$ with residual pseudorepresentation $\Db$ is ordinary if and only if its map $E_\Db \ra E$ factors through $E^\ord_\Db$. 
\item There is a \emph{universal reducible ordinary GMA} $E^\red_\Db$, a quotient of $E_\Db$, such that a GMA representation $G_{\bQ,S} \to E^\times$  with residual pseudorepresentation $\Db$ is reducible ordinary if and only if its map $E_\Db \ra E$ factors through $E^\red_\Db$.
\item $\End_\h(H)$ has a unique $\h$-GMA structure such that $\rho_H : G_{\bQ,S} \ra \Aut_\h(H)$ is an ordinary GMA representation. 
\end{enumerate}
\end{prop}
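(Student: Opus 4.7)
The plan is to obtain (1) and (2) as explicit quotients of $E_\Db$ by two-sided ideals encoding Definition~\ref{defn:ord_C-H}, and to deduce (3) from the local structure of $H$ at $p$.

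For (1), I would define $J^\ord \subset E_\Db$ to be the two-sided ideal generated by two families of elements. Writing $e_{11},e_{22}$ for the diagonal idempotents of the $R_\Db$-GMA structure on $E_\Db$, the first family consists of the $(1,2)$-components $\rho^u_{1,2}(\sigma) = e_{11}\rho^u(\sigma)e_{22}$ for $\sigma\in G_p$, which encode condition (1) of Definition~\ref{defn:ord_C-H}; the second consists of the elements $\rho^u_{1,1}(\sigma) - \kcyc^{-1}(\sigma)e_{11}$ for $\sigma \in I_p$, which encode condition (2). Using the GMA multiplication rules (products $B^u\cdot C^u$ land in the $(1,1)$-block, $C^u\cdot B^u$ in the $(2,2)$-block, and $B^u\cdot B^u = C^u\cdot C^u = 0$), one checks that $J^\ord$ respects the bimodule decomposition $E_\Db = \bigoplus_{i,j} e_{ii}E_\Db e_{jj}$, so $E^\ord_\Db := E_\Db/J^\ord$ inherits a GMA structure over the scalar quotient $R^\ord_\Db$. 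The universal property is then formal: any ordinary GMA representation $\rho: G_{\bQ,S}\to E^\times$ with residual pseudorepresentation $\Db$ kills both families of generators by Definition~\ref{defn:ord_C-H}, so the induced map $E_\Db \to E$ factors through $E^\ord_\Db$; conversely, the image of $\rho^u$ in $E^\ord_\Db$ is ordinary by construction. For (2), I would further quotient by the two-sided ideal extension of the reducibility ideal of \S\ref{subsec:reducibility}, namely the image of the multiplication $B^{u,\ord}\otimes_{R^\ord_\Db} C^{u,\ord}\to R^\ord_\Db$. By the discussion in \S\ref{subsec:reducibility}, vanishing of this ideal is exactly the reducibility condition, so the resulting quotient $E^\red_\Db$ has the claimed universal property.

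For (3), I would invoke Ohta's splitting of the ordinary filtration of $H$ at $p$, recalled in \cite[\S3.4]{WWE2015}: after ordinary projection, $H$ fits in an $\h[G_p]$-linear short exact sequence $0 \to H^{\mathrm{sub}} \to H \to H^{\mathrm{quo}} \to 0$ with $I_p$ acting on $H^{\mathrm{quo}}$ via $\kcyc^{-1}$, and this local filtration is compatible with the complex-conjugation decomposition $H = H^+ \oplus H^-$ of Example~\ref{eg:rho_H} in the sense that $H^{\mathrm{sub}} = H^+$ and $H^{\mathrm{quo}} \cong H^-$. In the GMA coordinates of Example~\ref{eg:rho_H}, $G_p$-stability of $H^+$ forces $\rho_{H,1,2}(G_p)\subset \Hom_\h(H^+,H^-)$ to vanish, while the $I_p$-action on the quotient $H^-$ identifies $\rho_{H,1,1}|_{I_p}$ with $\kcyc^{-1}\otimes_{\Z_p}\h$; these are exactly conditions (1) and (2) of Definition~\ref{defn:ord_C-H}. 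Uniqueness of the GMA structure comes down to showing that the orthogonal idempotents cutting out $H = H^+\oplus H^-$ are uniquely characterized by the requirement that they lift the distinct residual characters $\omega^{-1}$ and $\theta^{-1}$ in the specified order — an application of Hensel's lemma made possible by the local $p$-distinguishedness hypothesis (b) of \S\ref{subsec:iwasawa} (cf.\ Remark~\ref{rem: condition (b) is needed}). The main obstacle is the geometric compatibility in (3) between Ohta's local ordinary filtration at $p$ and the global complex-conjugation decomposition; this is non-trivial and is precisely the content of \cite[\S3.4]{WWE2015}, whereas (1) and (2) are largely formal once the universal Cayley–Hamilton machinery of \S\ref{subsec:universality} is in hand.
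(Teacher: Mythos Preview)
The paper's own proof of this proposition is nothing more than a citation to three results in the companion paper \cite{WWE2015} (Propositions 5.9.5 and 7.3.1, Theorem 7.1.2); you have gone further and sketched the constructions themselves. For parts (1) and (2) your approach---quotienting $E_\Db$ by the two-sided ideal generated by the explicit obstructions to ordinariness, then further by the reducibility ideal---is essentially what is carried out in \cite{WWE2015}. One should take a little more care that the quotient inherits a GMA structure over a well-defined scalar ring: your second family of generators $\rho^u_{1,1}(\sigma)-\kcyc^{-1}(\sigma)e_{11}$ lives in the $(1,1)$-corner, and the two-sided ideal it generates does not by itself impose the matching relation in the $(2,2)$-corner, so one must pass to the quotient of the scalar ring $R_\Db$ first and then form the induced GMA. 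But the idea is right.

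For part (3) there is a genuine gap. Your claim that ``$H^{\mathrm{sub}} = H^+$'' as submodules of $H$ is too strong: already for a single ordinary cuspidal eigenform the $+1$-eigenspace of complex conjugation and the $G_p$-stable line are generically distinct lines in the two-dimensional Galois representation, and nothing forces them to coincide in the family. What Ohta's results supply is that the composites $H_{\mathrm{sub}} \hookrightarrow H \twoheadrightarrow H^{\pm}$ are isomorphisms of $\h$-modules for the appropriate signs, identifying the \emph{graded pieces} of the two decompositions, not the submodules themselves. Consequently the GMA structure coming literally from $H = H^+\oplus H^-$ need not satisfy $\rho_{H,1,2}(G_p)=0$. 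The argument in \cite{WWE2015} instead uses the canonical GMA structure induced from $E_\Db$ via the universal property of \S\ref{subsec:universality}; ordinariness is then verified in those coordinates using Ohta's filtration, and uniqueness follows because any GMA structure on a Cayley--Hamilton quotient of $E_\Db$ compatible with $\Db$ is pulled back from the unique one on $E_\Db$. Your Hensel-lemma remark at the end is closer to the correct uniqueness argument than the $H^{\mathrm{sub}}=H^+$ route.
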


\begin{proof}
Statement (1) comes from Proposition 5.9.5, (2) comes from Proposition 7.3.1, and (3) comes from Theorem 7.1.2 of \cite{WWE2015}. 
\end{proof}

\subsection{Ordinary pseudorepresentations}

Having established a notion of ordinary GMA representation, we can now define ordinary pseudorepresentations.

\begin{defn}
\label{defn:ord_PsR}
Let $D: G_{\bQ,S} \ra A$ be a pseudorepresentation deforming $\Db$. Then we call $D$ \emph{ordinary} if there exists an ordinary GMA representation $\rho: G_{\bQ,S} \ra E^\times$ with scalar ring $A$ such that $D = \psi_{\GMA}(\rho)$.
\end{defn}

We write $R^\ord_\Db$ for the scalar ring of $E_\Db^\ord$, which admits a pseudorepresentation $D^\ord_\Db : G_{\bQ,S} \ra R^\ord_\Db$ defined as the composition of $D^u_\Db: G_{\bQ,S} \ra R_\Db$ with $R_\Db \rsurj R^\ord_\Db$. We have shown in \cite[Thm.\ 5.10.4]{WWE2015} that the ring $R_\Db^\ord$ represents the functor of ordinary pseudodeformations of $\Db$, with universal object $D^\ord_\Db$. We write $\cJ \subset R^\ord_\Db$ for the reducibility ideal of $D^\ord_\Db$. 

\begin{rem}
The reason for introducing GMA representations is to make Definition \ref{defn:ord_PsR}: not every pseudodeformation of $\Db$ comes from a representation, but every pseudodeformation comes from a GMA representation. 
\end{rem}

By definition, we see that the modular pseudorepresentation $\psi_{\GMA}(\rho_H): G_{\Q,S} \to \h$ arising from $\rho_H$ (as defined in Example \ref{eg:rho_H}) is ordinary. It can be extended to an $\fH$-valued pseudorepresentation with the following properties. 

\begin{prop}
\label{prop:Rord to fH}
There is a pseudorepresentation $D_H: G_{\bQ,S} \ra \fH$ that is ordinary, deforms $\Db$, and satisfies $D_H \otimes_\fH \h = \psi_{\GMA}(\rho_H)$. The corresponding map $\phi: R^\ord_\Db \ra \fH$ is:
\begin{enumerate}
\item a map of augmented $\Lam$-algebras, where the augmentation ideals are the reducibility ideals $\cJ \subset R^\ord_\Db$ of $D^\ord_\Db$ and $\I \subset \fH$ of $D_H$, and
\item surjective. 
\end{enumerate}
\end{prop}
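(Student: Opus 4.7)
The plan is to produce $D_H$ by gluing pseudorepresentations along the embedding of $\fH$ into $\h \oplus \Lambda$, then to deduce $\phi$ from the universal property of $R^\ord_\Db$, and finally to verify surjectivity by matching topological generators. The key structural input is Proposition 3.2.5 of \cite{WWE2015}: since $\I \to I$ is an isomorphism, the kernels of the cuspidal quotient $\fH \to \h$ and of the augmentation $\fH \to \Lambda$ intersect trivially, so $\fH$ embeds into $\h \oplus \Lambda$. On the $\h$-factor I take $\psi_{\GMA}(\rho_H)$, and on the $\Lambda$-factor I take the Eisenstein pseudorepresentation $D_{\mathrm{Eis}} := \psi(\omega^{-1} \oplus \theta^{-1}\dia{-})$. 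These agree modulo $\xi_\chi$ because the Galois action on $H/IH$ becomes reducible with exactly these characters, so their images factor through the common quotient $\Lambda/\xi_\chi = \h/I$. Evaluating at $\Fr_\ell$ (for $\ell \nmid Np$) returns $(T_\ell^{\h}, T_\ell^{\Lambda})$, which is the image of the Hecke operator $T_\ell \in \fH$; by Chebotarev density and continuity, the pair takes values in the closed subring $\fH \subset \h \oplus \Lambda$ on all of $G_{\bQ,S}$. This produces a pseudorepresentation $D_H: G_{\bQ,S} \to \fH$ deforming $\Db$ with $D_H \otimes_\fH \h = \psi_{\GMA}(\rho_H)$.

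To check that $D_H$ is ordinary, I would exhibit an ordinary GMA representation over $\fH$ inducing it. The module $\End_\h(H)$ with the GMA structure of Example \ref{eg:rho_H} is ordinary over $\h$ by Proposition \ref{prop:universal ordinary C-H}(3), while the diagonal GMA $\ttmat{\omega^{-1}}{0}{0}{\theta^{-1}\dia{-}}$ over $\Lambda$ is visibly ordinary (its $(1,2)$-entry is zero and its $(1,1)$-character restricts to $\kcyc^{-1}$ on $I_p$, matching Definition \ref{defn:ord_C-H}). Compatibility modulo $\xi_\chi$ glues these to an ordinary $\fH$-GMA inducing $D_H$. By Theorem 5.10.4 of \cite{WWE2015}, $D_H$ now corresponds to a unique $\Lambda$-algebra homomorphism $\phi: R^\ord_\Db \to \fH$ with $\phi_* D^\ord_\Db = D_H$. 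For claim (1), reducing $D_H$ modulo $\I$ yields $D_{\mathrm{Eis}}$, which is reducible, forcing $\phi(\cJ) \subseteq \I$; the induced map $R^\ord_\Db/\cJ \to \fH/\I$ is a $\Lambda$-algebra map $\Lambda \to \Lambda$, hence the identity, so $\phi$ respects the augmentations.

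For claim (2), surjectivity follows from the fact that $\fH$ is topologically generated as a $\Lambda$-algebra by the Hecke operators $T_\ell$ with $\ell \nmid Np$, together with the identification $\tr D_H(\Fr_\ell) = T_\ell$ coming from the Eichler--Shimura construction. Thus the image of $\phi$ contains the traces at all Frobenius elements, which form a topologically generating set, and $\phi$ is surjective. The main obstacle in this outline is the construction of $D_H$ in the first paragraph: showing that the glued pseudorepresentation actually takes values in $\fH$, rather than in the ambient product $\h \oplus \Lambda$, genuinely uses the delicate fact $\I \cong I$ from Proposition 3.2.5 together with the Eisenstein--cuspidal congruence modulo $\xi_\chi$. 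Once this is in hand, ordinarity and the remaining verifications are essentially formal consequences of the universal property of $R^\ord_\Db$ and standard generation properties of $\fH$.
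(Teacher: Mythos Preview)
Your overall strategy matches the paper's: glue $\psi_{\GMA}(\rho_H)$ with the Eisenstein pseudorepresentation along $\fH \hookrightarrow \h \times \Lambda$ (the paper cites \cite[Cor.~7.1.3]{WWE2015}), then obtain (2) from the Frobenius-trace description of Hecke operators (\cite[Lem.~7.1.4]{WWE2015}) and (1) from $R^\ord_\Db/\cJ \cong \Lambda$ (\cite[Prop.~7.3.1]{WWE2015}). Your construction of $D_H$ and your arguments for (1) and (2) are fine.

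The step that does not work as written is the verification of ordinarity by ``gluing GMAs modulo $\xi_\chi$.'' The two GMAs do not form a fiber product over $\Lambda/\xi_\chi$: the off-diagonal pieces of $\End_\h(H)$ are nonzero modulo $I$ (for instance $\h^\vee \otimes_\h \h/I \neq 0$ by Nakayama), whereas those of the diagonal $\Lambda$-GMA vanish, so there is no common reduction to glue along. The cleanest repair bypasses any explicit $\fH$-GMA. Since the two factor pseudorepresentations over $\h$ and over $\Lambda$ are each ordinary, the universal property already gives maps $R^\ord_\Db \to \h$ and $R^\ord_\Db \to \Lambda$; their product $R^\ord_\Db \to \h \times \Lambda$ lands in the subring $\fH$ because the trace and determinant values, which topologically generate the image, lie in $\fH$ by your first paragraph. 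This produces $\phi$ directly, and then ordinarity of $D_H$ is automatic: base-change the universal ordinary GMA $E^\ord_\Db$ along $\phi$. (If you insist on an explicit $\fH$-GMA, the correct object is the subalgebra of $\End_\h(H) \times \sm{\Lambda}{0}{0}{\Lambda}$ whose diagonal entries lie in $\fH$; this is closed under multiplication because off-diagonal products land in $I \cong \I \subset \fH$, but you must then also argue that the diagonal entries $\rho_{H,i,i}$ reduce to the Eisenstein characters modulo $I$, which is an extra step your sketch omits.)
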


\begin{proof} 
The pseudorepresentation $D_H$ is constructed by gluing $\psi_{\GMA}(\rho_H)$ together with the Eisenstein pseudorepresentation, and it follows that $D_H \otimes_\fH \h = \psi_{\GMA}(\rho_H)$ and that the reducibility ideal is $\I \subset \fH$ \cite[Cor.\ 7.1.3]{WWE2015}. 

Then (2) follows from \cite[Lem.\ 7.1.4]{WWE2015}, and $(1)$ follows from the fact that $R^\ord_\Db/\cJ \cong \Lam$ \cite[Prop.\ 7.3.1]{WWE2015}.
\end{proof}

Consequently, the functor of reducible ordinary pseudorepresentations is represented by $\Lam$, and $\Lam$ is the scalar ring of $E^\red_\Db$. 

\section{New Results}
\label{sec:new}

With the overview of \cite{WWE2015} complete, we now prove the main theorems.

\subsection{Reducible representations and class groups}
\label{subsec:redrep}

Let us write $E^\ord_\Db$ and $E^\red_\Db$ in GMA form as
\[
E^\ord_\Db \cong \begin{pmatrix}
R^\ord_\Db & B^\ord \\
C^\ord & R^\ord_\Db
\end{pmatrix}
, \qquad
E^\red_\Db \cong \begin{pmatrix}
\Lambda & B^\red \\
C^\red & \Lambda
\end{pmatrix}.
\]

Our goal is to control $\cJ/\cJ^2$ using Galois cohomology. For this, we use the surjection $B^\ord \otimes_{R^\ord_\Db} C^\ord \rsurj \cJ$ discussed in \S\ref{subsec:reducibility}. We also use the fact that the map $E^\red_\Db \ra E^\ord_\Db/\cJ E^\ord_\Db$, which exists because the target receives a reducible ordinary GMA representation, is surjective \cite[Prop.\ 7.3.1(4)]{WWE2015}. (Actually, it is an isomorphism  but that will not be used here.) Composing these surjections, we have 
\begin{equation}
\label{eq:key_surj}
B^\red \otimes_\Lam C^\red \rsurj B^\ord/\cJ \otimes_\Lam C^\ord/\cJ \rsurj \cJ/\cJ^2
\end{equation}
(cf.~\cite[Prop.\ 5.7.2]{WWE2015}). The main result is the following.
\begin{prop}
\label{prop:BandC}
$E^\red_\Db$ is determined as follows.
\begin{enumerate}
\item There exists a natural isomorphism 
\[
X_\chi(1)\lrisom B^\red. 
\]
\item Assume that $X_\theta = 0$. Then there exists a natural isomorphism
\[
\X_\chii^\#(1) \lrisom C^\red.
\]
Moreover, $C^\red$ is free of rank $1$ over $\Lam$.
\end{enumerate}
\end{prop}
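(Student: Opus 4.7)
The strategy is to express the off-diagonal blocks $B^\red$ and $C^\red$ as Galois cohomology groups by extracting universal $1$-cocycles from the reducible ordinary GMA representation on $E^\red_\Db$, and then to translate via class field theory to identify these cohomology groups with the Iwasawa class groups in the statement. The hypothesis $X_\theta = 0$ enters only at the end, via Proposition \ref{prop:X_control}, to give freeness of $C^\red$ in part (2). Concretely, write the universal reducible ordinary GMA representation as
\[
\rho(\sigma) = \begin{pmatrix} \chi_1(\sigma) & b(\sigma) \\ c(\sigma) & \chi_2(\sigma) \end{pmatrix},
\]
where $\chi_1, \chi_2 \colon G_{\bQ,S} \to \Lambda^\times$ are the canonical diagonal characters deforming $\omega^{-1}$ and $\theta^{-1}$, with $\chi_1$ the unique ordinary one satisfying $\chi_1|_{I_p} = \kcyc^{-1}$. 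Since $b \cdot c = 0$ in the reducible GMA $E^\red_\Db$, the identity $\rho(\sigma\tau) = \rho(\sigma)\rho(\tau)$ forces $\tilde b(\sigma) := \chi_2(\sigma)^{-1} b(\sigma)$ and $\tilde c(\sigma) := \chi_1(\sigma)^{-1} c(\sigma)$ to be $1$-cocycles for the $\Lambda[G_{\bQ,S}]$-modules $B^\red(\chi_1\chi_2^{-1})$ and $C^\red(\chi_2\chi_1^{-1})$, respectively. The ordinary condition $b|_{G_p} = 0$ translates into $\tilde b|_{G_p} = 0$; no corresponding constraint is placed on $\tilde c$.

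By the universal property of $E^\red_\Db$ among reducible ordinary GMAs with residual pseudorepresentation $\Db$ (\cite[Prop.~7.3.1]{WWE2015}), any $\Lambda$-module $M$ equipped with a $1$-cocycle of the appropriate twisted type, together with the ordinary local condition in the $B$-case, corresponds uniquely to a $\Lambda$-linear map out of $B^\red$ or $C^\red$. This identifies $B^\red$ and $C^\red$ with Galois cohomology modules carrying the corresponding local conditions. Identifying the twists $\chi_1\chi_2^{-1}$ and $\chi_2\chi_1^{-1}$ with the paper's $\Lambda$-valued character notation on the $\chi$ and $\chii$ eigenspaces, respectively, and invoking the class field theory translations of \cite[Cor.~6.1.3, Cor.~6.3.1]{WWE2015} together with the duality of \S\ref{subsec: duality}, one obtains natural isomorphisms
\[
X_\chi(1) \lrisom B^\red, \qquad \X_\chii^\#(1) \lrisom C^\red.
\]
The distinction between $X$ and $\X$ encodes precisely the presence (for $B^\red$) or absence (for $C^\red$) of the local condition at $p$. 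For part (2), the freeness of $\X_\chii^\#(1)$ of rank $1$ over $\Lambda$ under $X_\theta = 0$ is then exactly the content of Proposition \ref{prop:X_control}.

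The main technical obstacle is bookkeeping the twists and verifying that the universal cocycle formalism correctly matches the ordinary local condition on $b$-cocycles with the unramifiedness at $p$ that distinguishes $X_\chi(1)$ from $\X_\chi(1)$. This matching relies crucially on the local distinguishedness hypothesis (b) of \S\ref{subsec:iwasawa}, which separates the two residual characters on $G_p$ and makes the GMA decomposition of $E^\red_\Db$ detectable locally at $p$; without it, the passage from $\tilde b|_{G_p} = 0$ to an unramifiedness condition on the associated extension would not be clean.
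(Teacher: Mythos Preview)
Your strategy matches the paper's: extract universal cocycles from the GMA structure to obtain functorial isomorphisms $\Hom_\Lambda(B^\red, M) \cong H^1_{(c)}(M^{\langle-\rangle}(-1))$ and $\Hom_\Lambda(C^\red, M) \cong H^1(M^\#(1))$ (this is Lemma~\ref{lem:BC_isoms}), then identify the representing objects via duality and the Yoneda lemma. Your placement of the hypothesis $X_\theta = 0$, entering only for freeness via Proposition~\ref{prop:X_control}, is also correct.

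The one place your sketch is too compressed is the sentence ``This identifies $B^\red$ and $C^\red$ with Galois cohomology modules.'' The universal property yields the \emph{functor} $\Hom_\Lambda(B^\red,-)$, not $B^\red$ itself; to pin down $B^\red$ you must exhibit another module representing the same functor and invoke Yoneda. The paper does this by feeding the functor through the Poitou--Tate spectral sequences \eqref{eq: spectral Rc=RHom(R)}--\eqref{eq: R = RHom(Rc)} and showing they degenerate: on the $B$-side this needs $H^0(\Lambda^\#(2)) = H^1(\Lambda^\#(2)) = 0$ and $H^2(\Lambda^\#(2)) \cong X_\chi(1)$; on the $C$-side it needs $H^3_{(c)}(\Lambda^{\langle-\rangle}) = 0$, $H^1_{(c)}(\Lambda^{\langle-\rangle}) = 0$ (weak Leopoldt), and $H^2_{(c)}(\Lambda^{\langle-\rangle}) \cong \X_{\chi^{-1}}^\#(1)$. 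These vanishing results and $H^2$ identifications are exactly the content of the references you cite, so your plan is sound---but the mechanism connecting them is Yoneda plus spectral-sequence degeneration, not a direct reading of $B^\red$ or $C^\red$ as a single cohomology group.
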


First, some lemmas. We will need the following notation, which is particular to \S\ref{subsec:redrep}. We will abbreviate $H^i(\Z[1/Np],-)$ and $H_{(c)}^i(\Z[1/Np],-)$ to $H^i(-)$ and $H_{(c)}^i(-)$, respectively. For $n \in \bZ_{\geq 1}$, we will let $H_{n}^i(-)$ denote $\bigoplus_{\ell | n} H^i(\Q_\ell,-)$, where $\ell$ runs over prime divisors of $n$. Likewise, for a $\Lam$-module $M$ (with trivial $G_{\bQ,S}$-action), we will write $M^\#$ for $\Lam^\# \otimes_\Lam M$ and write $M^\dia{-}$ for $\Lam^\dia{-} \otimes_\Lam M$.
\begin{lem}
\label{lem: no cohom at ell}
For any finitely generated $\Lam$-module $M$ and any $\ell \mid N$ we have $H^1_\ell(M^{\dia{-}}(-1))=0$. In particular, $H^1_{Np}(M^{\dia{-}}(-1))=H^1_p(M^{\dia{-}}(-1))$.
\end{lem}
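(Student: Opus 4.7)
The plan is to directly compute $H^1(\Q_\ell, M^{\dia{-}}(-1))$ by exploiting that the action of $G_{\Q_\ell}$ factors through an unramified character whose Frobenius value is not congruent to $1$ modulo the maximal ideal $\m_\Lambda$.

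First I would verify unramifiedness. Since $\ell \ne p$, $\kcyc|_{I_\ell}$ is trivial. For $\dia{-}: G_{\Q,S} \onto \Gamma \subset \Lambda^\times$, note that it factors through the pro-$p$ quotient of $\Gal(\Q(\zeta_{Np^\infty})/\Q) \cong \Z_{p,N}^\times$; the inertia at $\ell \mid N$ in the latter is tame of order $\phi(\ell^{v_\ell(N)})$, which is coprime to $p$ by the hypothesis $p \nmid \phi(N)$, so its image in the pro-$p$ group $\Gamma$ vanishes. Hence $G_{\Q_\ell}$ acts on $M^{\dia{-}}(-1)$ through the unramified quotient $\hat{\Z}$. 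Explicitly, $\dia{-}(\Fr_\ell) = [\ell/\omega(\ell)] \in \Gamma$ and $\kcyc(\Fr_\ell) = \ell$, so $\Fr_\ell$ acts on $M^{\dia{-}}(-1)$ by multiplication by $\alpha := [\ell/\omega(\ell)] \cdot \ell^{-1} \in \Lambda^\times$ via the $\Lambda$-module structure on $M$. Reducing modulo $\m_\Lambda$, $\alpha \equiv \omega(\ell)^{-1}$; since $p \nmid \phi(N)$ forces $\ell \not\equiv 1 \pmod p$, we have $\omega(\ell)^{-1} \ne 1$ in $\F^\times$, hence $\alpha - 1 \in \Lambda^\times$ is a unit.

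From the inflation-restriction sequence for $1 \to I_\ell \to G_{\Q_\ell} \to \hat{\Z} \to 1$, using that $\mathrm{cd}(\hat{\Z}) = 1$, one obtains
\[
0 \to M^{\dia{-}}(-1) / (\alpha - 1) M^{\dia{-}}(-1) \to H^1(\Q_\ell, M^{\dia{-}}(-1)) \to H^1(I_\ell, M^{\dia{-}}(-1))^{\Fr_\ell} \to 0.
\]
The leftmost term vanishes since multiplication by the unit $\alpha - 1$ is an isomorphism on $M^{\dia{-}}(-1)$. For the rightmost term I would identify $H^1(I_\ell, M^{\dia{-}}(-1)) \cong M^{\dia{-}}(-2)$ via the pro-$p$ tame inertia $\Z_p(1)$ (the wild inertia, being pro-$\ell$, contributes nothing to a pro-$p$ module), on which $\Fr_\ell$ acts by $\alpha \cdot \ell^{-1}$. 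When $\alpha \ell^{-1} - 1$ is also a unit (i.e., $\ell \not\equiv -1 \pmod p$) the same argument gives vanishing; in general one uses that $\alpha \ell^{-1} - 1$ is a nonzero element of the domain $\Lambda$.

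The main obstacle I anticipate is making the inertia contribution vanish uniformly for all finitely generated $\Lambda$-modules $M$, especially when $\ell \equiv -1 \pmod p$ so that $\alpha \ell^{-1} - 1$ lies in $\m_\Lambda$ and could annihilate nontrivial torsion in $M$. A clean route is via local Tate duality as in Proposition \ref{prop:Lambda Poitou-Tate duality} applied over $\Q_\ell$: reduce vanishing of $H^1$ to vanishing of $H^0$ and $H^2$ via the local Euler--Poincar\'e formula, with $H^2(\Q_\ell, M^{\dia{-}}(-1))$ dually governed by $H^0(\Q_\ell, M^{\dia{-}^{-1}}(2))$, and run the parallel invertibility argument for the dual character. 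The ``in particular'' statement is then immediate, since $H^1_{Np}(-) = H^1_p(-) \oplus \bigoplus_{\ell \mid N} H^1_\ell(-)$ by definition and each summand with $\ell \mid N$ vanishes.
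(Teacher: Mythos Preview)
Your direct inflation--restriction computation rests on the claim that $M^{\dia{-}}(-1)$ is unramified at $\ell$, but this is not so. The $\Lambda$-valued character $\dia{-}$ reduces modulo $\m_\Lambda$ to $\theta$, not to the trivial character (the paper's one-line definition as ``the projection to $\Gamma$'' is imprecise on this point; the proof of the lemma itself, and the fact that the $(2,2)$-entry of $E^\red_\Db$ must carry a character deforming $\theta^{-1}$, force $\dia{-} \equiv \theta$). Since $\theta$ is primitive of conductor $Np$, it is ramified at every $\ell \mid N$, so inertia does not act trivially on $\bar M^{\dia{-}}(-1)$ and your Frobenius-eigenvalue analysis does not get off the ground. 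Even granting your reading of $\dia{-}$, you correctly spot that the inertia contribution fails when $\ell \equiv -1 \pmod p$; the remark that $\alpha\ell^{-1} - 1$ is merely a nonzero element of the domain $\Lambda$ does nothing for torsion $M$.

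Your proposed fallback via the local Euler--Poincar\'e formula and Tate duality is precisely the paper's route, but the paper executes it over the residue field rather than over $\Lambda$: first Nakayama-reduce to $\bar M = M/\m_\Lambda M$, then use
\[
\dim H^1_\ell(\bar M^{\dia{-}}(-1)) = \dim H^0_\ell(\bar M^{\dia{-}}(-1)) + \dim H^0_\ell(\bar M^{\#}(2)).
\]
Both $H^0$ terms vanish because the residual characters $\theta\omega^{-1}$ and $\theta^{-1}\omega^2$ are nontrivial on $I_\ell$: here $\omega$ is unramified at $\ell \ne p$ while $\theta$ is ramified at $\ell$ by primitivity. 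The operative hypothesis is thus assumption (a) of \S\ref{subsec:iwasawa}, not $p \nmid \phi(N)$; once you correct the residual character, your ``parallel invertibility argument'' becomes the simpler observation that inertia already acts nontrivially.
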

\begin{proof}
Let $\bar{M}=M/\m_\Lambda M$, where $\m_\Lambda$ is the maximal ideal of $\Lambda$. By Nakayama's lemma, it suffices to show that $H^1_\ell(\bar{M}^{\dia{-}}(-1))=0$. By the Euler characteristic formula and local Tate duality, we have
\begin{equation}
\label{eq: euler char}
\dim (H^1_\ell(\bar{M}^{\dia{-}}(-1))) = \dim(H^0_\ell(\bar{M}^{\dia{-}}(-1))) +\dim(H^0_\ell(\bar{M}^{\#}(2))).
\end{equation}
The Galois action on $\bar{M}^{\dia{-}}(-1)$ and $\bar{M}^{\#}(2)$ is via the characters $\theta \omega^{-1}$ and $\theta^{-1}\omega^2$, respectively. Both these characters are non-trivial at $\ell$ because $\omega$ is trival at $\ell$ and we assume that $\theta$ is primitive. This implies that the $H^0_\ell$ groups appearing in \eqref{eq: euler char} are $0$, so $H^1_\ell(\bar{M}^{\dia{-}}(-1))=0$.
\end{proof}

\begin{lem}
\label{lem:BC_isoms}
Functorially in finitely generated $\Lam$-modules $M$, we have isomorphisms
\begin{equation}
\label{eq:B}
\Hom_\Lam(B^\red, M) \lrisom \Hc^1(M^\dia{-}(-1))
\end{equation}
and
\begin{equation}
\label{eq:C}
\Hom_\Lam(C^\red, M) \lrisom H^1(M^\#(1)).
\end{equation}
\end{lem}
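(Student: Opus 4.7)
The plan is to extract $B^\red$ and $C^\red$ from the universal GMA property of $E^\red_\Db$, interpret each as the universal target for a class of $1$-cocycles, and then identify the resulting cohomology groups with those in the statement.

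\textbf{Step 1 (Universal cocycles).} I use the GMA structure of $E^\red_\Db$: writing $\rho^\red : G_{\bQ,S} \to (E^\red_\Db)^\times$ in matrix form, the multiplicativity of $\rho^\red$ forces $\rho^\red_{1,2}$ and $\rho^\red_{2,1}$ to satisfy the standard $1$-cocycle identity after dividing by an appropriate diagonal factor. Concretely, $\rho^\red_{1,2}/\rho^\red_{2,2} : G_{\bQ,S} \to B^\red$ is a $1$-cocycle for the $G_{\bQ,S}$-action on $B^\red$ via $\rho^\red_{1,1}(\rho^\red_{2,2})^{-1}$, and dually for $C^\red$. By the universal property of $E^\red_\Db$ (combining Prop.~5.7.2 and Prop.~7.3.1 of \cite{WWE2015}), the modules $B^\red$ and $C^\red$ co-represent the functors of $M$-valued cocycles modulo coboundaries. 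On the $B$-side, the ordinary condition $\rho^\red_{1,2}(G_p) = 0$ further cuts out the subfunctor of cocycles that restrict to zero on $G_p$.

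\textbf{Step 2 (Matching twists).} Because $p \nmid \phi(N)$, there are no nontrivial pro-$p$ characters of $G_{\bQ,S}$ that are unramified at $p$, so the ordinary condition $\rho^\red_{1,1}|_{I_p} = \kcyc^{-1}$ forces $\rho^\red_{1,1} = \kcyc^{-1}$ globally. The universal property of $\Lam$ as the reducible ordinary deformation ring then identifies $\rho^\red_{2,2}$ with the tautological lift of $\theta^{-1}$ to $\Lam^\times$. A direct computation (tracking Teichm\"uller, tautological, and cyclotomic contributions inside $\Lam^\times$) shows that $\rho^\red_{1,1}(\rho^\red_{2,2})^{-1}$ and $\rho^\red_{2,2}(\rho^\red_{1,1})^{-1}$ are, respectively, the characters underlying $\Lam^\dia{-}(-1)$ and $\Lam^\#(1)$. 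This produces at once $\Hom_\Lam(C^\red, M) \lrisom H^1(M^\#(1))$ and
\[
\Hom_\Lam(B^\red, M) \lrisom \ker\!\bigl(H^1(M^\dia{-}(-1)) \to H^1(G_p, M^\dia{-}(-1))\bigr).
\]

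\textbf{Step 3 (Compactly supported cohomology).} I convert this kernel into $\Hc^1$ using the long exact sequence relating $H^\bullet$, $\bigoplus_v H^\bullet(K_v,-)$, and $\Hc^\bullet$. Lemma \ref{lem: no cohom at ell} gives $H^1(G_\ell, M^\dia{-}(-1)) = 0$ for all $\ell \mid N$, and the archimedean contribution vanishes because $p$ is odd (complex conjugation acts by $-1$ on the residual representation $\bF(\omega^{-1})$, and $-1$ is a unit). The $H^0$-terms in the long exact sequence vanish since $M^\dia{-}(-1)$ has residually nontrivial action. Combining, $\Hc^1(M^\dia{-}(-1)) \cong \ker(H^1 \to H^1(G_p))$, matching the description from Step 2.

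The main obstacle will be Step 2: matching the character twists exactly. This requires careful bookkeeping among the Teichm\"uller lift of $\theta$, the tautological character $\dia{-}$, and $\kcyc$, all living inside $\Lam^\times$ through different embeddings, and keeping track of how the $\theta$-eigenspace projection interacts with the $G_{\bQ,S}$-action. Step 1 is essentially formal once the universal properties from Sections 5 and 7 of \cite{WWE2015} are invoked, but still requires checking that the coboundaries on the cocycle side correspond exactly to diagonal conjugations of the GMA rather than spurious extra identifications.
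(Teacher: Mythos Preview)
Your proposal is correct and follows essentially the same route as the paper: interpret the off-diagonal entries of the universal reducible ordinary GMA as universal cocycles (the paper cites \cite[Thm.~1.5.5]{BC2009} for one direction and constructs the inverse by hand, while you package both directions into the universal property of $E^\red_\Db$), then reduce the local condition at $p$ to compactly supported cohomology by killing the contributions at $\ell \mid N$ via Lemma~\ref{lem: no cohom at ell} and using that the residual character is nontrivial at each place to kill the $H^0$ terms. Your Step~2 makes explicit the character bookkeeping that the paper's proof elides (the paper simply writes the target GMA with entries $\kcyc^{-1}$ and $\langle-\rangle^{-1}$, implicitly invoking \cite[Prop.~7.3.1]{WWE2015}); your remark that this is the main obstacle is fair, but the computation is routine once one has $R^\ord_\Db/\cJ \cong \Lambda$.
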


\begin{proof}
For \eqref{eq:C}, \cite[Thm.\ 1.5.5]{BC2009} tells us that there is a natural $\Lam$-linear injective map $i_C: \Hom_\Lam(C^\red, M) \rinj H^1(M^\#(1))$ when $M$ is a cyclic module. But nothing about the proof depends upon $M$ being cyclic, so we have injectivity in general. 

An element of $H^1(M^\#(1))$ results in a short exact sequence of $\Lam[G_{\bQ,S}]$-modules $0 \ra M^\# \ra \mathcal{E} \ra \Lam(-1) \ra 0$. Choose an element $x \in \mathcal{E}$ mapping to $1 \in \Lam$ and write $f: G_{\bQ,S} \ra M$ for the map $\gamma \mapsto \gamma \cdot x - \kcyc^{-1}(\gamma)x$. Then we have a GMA representation
\[
\rho: G_{\bQ,S} \lra \begin{pmatrix} \Lambda & 0 \\ M & \Lambda
\end{pmatrix}, 
\qquad
\rho(\gamma) = \begin{pmatrix} \kcyc^{-1}(\gamma) & 0 \\ f(\gamma) & \langle\gamma\rangle^{-1} \end{pmatrix}.
\]
By the universal property of $E^\red_\Db$, there exists a unique map $C^\red \ra M$ induced by this representation. This construction is an inverse to the construction of $i_C$ in \cite[Thm.\ 1.5.5]{BC2009}, so we have proved that \eqref{eq:C} is an isomorphism. 

Recall the definition of $H^1_{(c)}$ as the cohomology of a mapping cone. Note that $H^0_{Np}(M^\dia{-}(-1)) = 0$ for any $M$. This follows from the fact that the residual character $\chi$ is non-trivial on decomposition groups at all primes dividing $Np$. Consequently, $H^1_{(c)}(M^\dia{-}(-1))$ is naturally isomorphic to the subset of $H^1(M^\dia{-}(-1))$ whose restriction to $H^1_{Np}(M^\dia{-}(-1))$ is zero. By Lemma \ref{lem: no cohom at ell}, $H^1_{Np}(M^\dia{-}(-1))=H^1_{p}(M^\dia{-}(-1))$.

We have an injection
 \[
i_B: \Hom_\Lam(B^\red, M) \rinj H^1_{(c)}(M^\dia{-}(-1)) \subset H^1(M^\dia{-}(-1))
\]
like $i_C$ above, because any extension of $\Lam^{\langle-\rangle}$ by $M(-1)$ realized by a GMA map
\[
E^\red_\Db \ra \begin{pmatrix} \Lam & M \\ 0 & \Lam \end{pmatrix}
\]
induces a trivial extension of $G_p$-representations. The same argument as above shows that $i_B$ is surjective. 
\end{proof}

Now, we prove Proposition \ref{prop:BandC}.

\begin{proof}
We will use the Yoneda lemma for finitely generated $\Lam$-modules to determine $B^\red$ and $C^\red$. Indeed, $B^\red$ and $C^\red$ are finitely generated by the fact that $E_\Db$ is a finite $R_\Db$-module (see \S\ref{subsec:universality}), and the construction of $E^\red_\Db$.

Let's begin with $B^\red$. Let $M$ be an arbitrary finitely generated $\Lambda$-module. We will use the spectral sequence of Proposition \ref{prop:Lambda Poitou-Tate duality} 
\[
\Ext^p_\Lam(H^{3-q}(\Lam^\#(2)), M) \implies \Hc^{p+q}(M^\dia{-}(-1)).
\]
From \cite[Cor.\ 6.3.1]{WWE2015}, we have
\[
H^0(\Lam^\#(2)) \cong 0, \qquad H^1(\Lam^\#(2)) \cong 0, \qquad H^2(\Lam^\#(2)) \cong X_\chi(1). 
\]
The spectral sequence degenerates to yield a functorial isomorphism 
\[
\Hom_\Lambda( X_\chi(1),M)\lrisom \Hc^{1}(M^\dia{-}(-1)).
\] From this and \eqref{eq:B}, we see that $X_\chi(1)$ and $B^\red$ represent the same functor $M \mapsto \Hc^{1}(M^\dia{-}(-1))$. Then the Yoneda lemma  implies that $X_\chi(1) \isoto B^\red$. 

We now similarly calculate $C^\red$. Let $M$ be any finitely generated $\Lambda$-module, and consider the spectral sequence
\[
\Ext^p_\Lam(\Hc^{3-q}(\Lam^\lr), M) \implies H^{p+q}(M^\#(1)).
\]
As in the proof of Proposition \ref{prop:X_control}, we have $\Hc^2(\Lam^\lr) \cong \X_\chii^\#(1)$. Similar arguments show that $\Hc^3(\Lam^\lr)=0$, and, using the weak Leopoldt conjecture (see \cite[Theorem 10.3.22]{NSW2008}), that $\Hc^1(\Lam^\lr) = 0$. Then the spectral sequence degenerates to yield $\Hom_\Lambda(\X_\chii^\#(1), M)\cong H^{1}(M^\#(1))$. As above, using \eqref{eq:C} and the Yoneda lemma, we obtain $\X_\chii^\#(1) \isoto C^\red$. 
\end{proof}

\subsection{A version of Wiles's numerical criterion}
Having controlled $\cJ/\cJ^2$ in terms of Iwasawa class groups, we will now make use of a version of Wiles's numerical criterion \cite[Appendix]{wiles1995} to prove our $R=T$-theorem. We thank Eric Urban for suggesting that the numerical criterion might be used to improve an earlier version. We follow the exposition of \cite{lci}. 

Consider the diagram
\[\xymatrix{
R^\ord_\Db \ar[r]^{\phi} \ar[dr]_{\pi_{R^\ord_\Db}} & \fH \ar[d]^{\pi} \\
 & \Lambda
}\]
where $\phi$ arises from Proposition \ref{prop:Rord to fH}. In this situation, de Smit, Rubin and Schoof prove the following theorem on the way to giving a proof of Wiles's criterion.
\begin{thm} \cite[Thm.\ of \S3 (pg.\ 9)]{lci}
\label{thm: criterion}
The map $\phi$ is an isomorphism of complete intersections if and only if $\phi(\Fitt_{R^\ord_\Db}(\cJ)) \not \subset \m_\Lambda 	\fH$.
\end{thm}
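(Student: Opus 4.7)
The plan is to invoke the numerical criterion of de Smit, Rubin, and Schoof \cite[\S3]{lci}, a refinement of Wiles's original criterion \cite[Appendix]{wiles1995}. The hypotheses of the dSRS theorem are satisfied in our situation: by Proposition \ref{prop:Rord to fH}, $\phi: R^\ord_\Db \onto \fH$ is a surjection of Noetherian local rings, both finite flat over the regular local ring $\Lambda$, compatible with $\Lambda$-algebra augmentations having kernels $\cJ$ and $\I$. The theorem therefore applies verbatim, and no new argument is needed.

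For transparency, I would trace through the dSRS argument in our setting. Define the \emph{congruence ideals} $\eta_R := \pi_R(\Ann_{R^\ord_\Db}(\cJ))$ and $\eta_T := \pi_T(\Ann_\fH(\I))$, both ideals of $\Lambda$. Two facts underlie the criterion: (i) for any Noetherian local $\Lambda$-algebra $R$ with augmentation ideal $\cJ$, one has $\lth_\Lambda(\Lambda/\pi_R(\Fitt_R(\cJ))) \leq \lth_\Lambda(\cJ/\cJ^2)$, with equality if and only if $R$ is a complete intersection over $\Lambda$, in which case $\Fitt_R(\cJ) = \Ann_R(\cJ)$; (ii) the surjection $\phi$ produces compatible containments $\phi(\Fitt_{R^\ord_\Db}(\cJ)) \subseteq \Fitt_\fH(\I) \subseteq \Ann_\fH(\I)$, and $\phi$ is an isomorphism of complete intersections precisely when these containments are all equalities and the complete intersection property is moreover achieved.

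The condition $\phi(\Fitt_{R^\ord_\Db}(\cJ)) \not\subset \m_\Lambda \fH$ is exactly what is needed to force equality throughout. The \emph{only if} direction is the easier one: if $\phi$ is already known to be an isomorphism of complete intersections, the Fitting ideal is generated by images of a regular sequence cutting out $R^\ord_\Db$ over $\Lambda$, at least one of whose coefficients is a unit in $\Lambda$, so that its image in $\fH/\m_\Lambda \fH$ is nonzero. The \emph{if} direction requires the detailed Fitting-ideal bookkeeping of \cite[\S3]{lci}, which uses the non-containment in $\m_\Lambda \fH$ together with a Nakayama-type argument to produce sufficiently many elements of $\Fitt_{R^\ord_\Db}(\cJ)$ to force all the containments to be equalities, simultaneously witnessing the complete intersection property and the injectivity of $\phi$. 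The main obstacle I anticipate is that Wiles's original argument was phrased over a discrete valuation ring, whereas our base $\Lambda$ is a regular local ring of Krull dimension $2$; however, the dSRS argument is already written in the generality of complete Noetherian local base rings, so this causes no difficulty.
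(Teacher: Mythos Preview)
Your approach matches the paper's exactly: the paper does not prove this statement independently but simply cites it as \cite[Thm.\ of \S3 (pg.\ 9)]{lci}, so invoking the de Smit--Rubin--Schoof criterion verbatim is precisely what is done. One minor correction to your hypothesis-check: it is not known a priori that $R^\ord_\Db$ is finite flat over $\Lambda$ (Proposition \ref{prop:Rord to fH} gives only surjectivity and the augmented $\Lambda$-algebra structure, and flatness of $\fH$ comes from Hida theory), but the dSRS criterion does not require flatness of the source ring $R$ --- only of the target $T = \fH$ --- so this does not affect the argument.
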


We will use this theorem to show that $\phi$ is an isomorphism under the assumption that $X_\theta=0$. The proof is inspired by the proof of Criteria I in \cite{lci}. We first prove some preliminary results. The notation `$\Fitt$' that appears in the theorem refers to Fitting ideals, which are reviewed in \cite[\S1]{lci}. We will make frequent use of the following well known properties of Fitting ideals (see \cite[Prop.\ 1.1]{lci}).

\begin{lem}
\label{lem: fitting}
Let $A$ be a ring, $M$ a finitely presented $A$-module, and $B$ an $A$-algebra. Then:
\begin{enumerate}
\item $\Fitt_B(M\otimes_A B)=\Fitt_A(M) \cdot B$.
\item $\Fitt_A(M) \subset \Ann_A(M)$.
\end{enumerate}
\end{lem}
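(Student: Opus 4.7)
The plan is to rely on the standard presentation-based definition of Fitting ideals. Given a finite presentation
\[
A^n \xrightarrow{\phi} A^m \to M \to 0,
\]
$\Fitt_A(M)$ is defined as the ideal of $A$ generated by the $m \times m$ minors of a matrix representing $\phi$ (with the convention that this is the zero ideal if $n < m$). I would take as given the foundational fact that this ideal is independent of the chosen presentation, which is verified by checking invariance under the elementary operations relating two finite presentations (adding or removing a trivial generator / relation, and row/column operations on the presentation matrix).

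For part (1), the key observation is that $- \otimes_A B$ is right exact, so tensoring the presentation above over $A$ with $B$ yields a presentation
\[
B^n \to B^m \to M \otimes_A B \to 0
\]
whose matrix is obtained by applying $A \to B$ entrywise to the matrix of $\phi$. The $m \times m$ minors of the new matrix are exactly the images in $B$ of the corresponding minors of $\phi$, so the ideal they generate in $B$ is $\Fitt_A(M) \cdot B$, giving $\Fitt_B(M \otimes_A B) = \Fitt_A(M) \cdot B$.

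For part (2), it suffices to show that each generator of $\Fitt_A(M)$ annihilates $M$, so I would take a single $m \times m$ submatrix $\phi'$ of the presentation matrix and show $\det(\phi') \cdot M = 0$. The adjugate identity $\phi' \cdot \mathrm{adj}(\phi') = \det(\phi') \cdot I_m$ shows that every column of $\det(\phi') \cdot I_m$ lies in the image of $\phi'$, and hence in the image of $\phi$. Evaluating at the standard basis vectors of $A^m$, this means $\det(\phi')$ kills each of the chosen generators of $M$, and therefore all of $M$.

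The only genuine obstacle is the foundational well-definedness of $\Fitt_A(M)$ underlying both parts; once that is granted, (1) is immediate from right exactness of tensor product and (2) is immediate from the adjugate formula. Since these are entirely standard facts in commutative algebra, in practice I would simply cite them from the reference \cite{lci} already used in the paper rather than reprove them in detail.
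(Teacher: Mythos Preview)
Your proposal is correct, and in fact goes further than the paper: the paper does not prove this lemma at all but simply cites \cite[Prop.\ 1.1]{lci}, exactly as you suggest doing at the end. Your sketched arguments for (1) via right exactness of tensor and (2) via the adjugate identity are the standard ones and are fine.
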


Before stating the next lemma, we summarize some known results about the Eisenstein ideals $\I$ and $I$ (see \cite[Prop.\ 3.2.5 and Lem.\ 3.2.9]{WWE2015}): the natural map $\I \to I$ is an isomorphism, $\h/I \cong \Lambda/\xi$, and $\Ann_\fH(\I)=\ker(\fH \to \h)$. In particular, $I$ is a faithful $\h$-module.

\begin{lem}
\label{lem: fitt of I}
We have $\pi(\Fitt_\fH(\I))=\Fitt_\Lambda(I/I^2) \subset (\xi)$ as ideals of $\Lambda$.
\end{lem}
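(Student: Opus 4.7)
The plan is to establish the equality and the containment separately, using standard properties of Fitting ideals together with the structural facts about $\fH, \h, \I, I$ summarized just before the statement. The proof should be essentially formal, with no serious obstacles.

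For the equality $\pi(\Fitt_\fH(\I)) = \Fitt_\Lam(I/I^2)$, I would first apply the base-change formula of Lemma \ref{lem: fitting}(1) along $\pi: \fH \onto \Lam$ to obtain
\[
\pi(\Fitt_\fH(\I)) \;=\; \Fitt_\fH(\I) \cdot \Lam \;=\; \Fitt_\Lam(\I \otimes_\fH \Lam) \;=\; \Fitt_\Lam(\I/\I^2),
\]
and then identify $\I/\I^2$ with $I/I^2$ as $\Lam$-modules. The given isomorphism $\I \isoto I$ of $\fH$-modules is automatically $\Lam$-linear, and it carries $\I^2$ onto $I^2$ because $\fH \to \h$ is a ring homomorphism (products of elements of $\I$ map to products of their images in $I$); hence it descends to a $\Lam$-linear isomorphism $\I/\I^2 \isoto I/I^2$.

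For the containment in $(\xi)$, I would combine the inclusion $\Fitt_\fH(\I) \subset \Ann_\fH(\I) = \ker(\fH \to \h)$, which follows from Lemma \ref{lem: fitting}(2) and the given identification of the annihilator, with the claim that $\pi$ carries $\ker(\fH \to \h)$ into $(\xi)$. The latter reduces to checking that the two $\Lam$-algebra maps $\fH \to \Lam/\xi$ given by $\fH \xrightarrow{\pi} \Lam \onto \Lam/\xi$ and by $\fH \onto \h \onto \h/I = \Lam/\xi$ coincide. Since $\pi$ is split by the structure map $\Lam \to \fH$, the $\Lam$-module decomposition $\fH = \Lam \oplus \I$ reduces the check to the restrictions to $\Lam$ (on which both maps are the natural projection modulo $\xi$) and to $\I$ (on which the first is zero by definition of $\pi$, and the second is zero because $\I$ maps to $I \subset \h$, which is killed modulo $I$). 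Agreement of the two maps gives $\ker(\fH \to \h) \subset \pi^{-1}((\xi))$, hence $\pi(\ker(\fH \to \h)) \subset (\xi)$. The only subtlety worth noting is the careful bookkeeping of these two composite maps $\fH \to \Lam/\xi$ and the interaction between $\fH$- and $\h$-module structures via $\fH \to \h$.
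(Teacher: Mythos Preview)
Your argument is correct. The first paragraph, establishing the equality $\pi(\Fitt_\fH(\I)) = \Fitt_\Lambda(I/I^2)$, is essentially identical to the paper's proof.

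For the containment $\Fitt_\Lambda(I/I^2) \subset (\xi)$, however, you take a different route. The paper argues entirely on the $\h$-side: since $I$ is a faithful $\h$-module, Lemma~\ref{lem: fitting}(2) gives $\Fitt_\h(I) = 0$; base-changing along $\h \onto \h/I$ yields $\Fitt_{\h/I}(I/I^2) = 0$; and since $\h/I \cong \Lambda/\xi$, one more application of Lemma~\ref{lem: fitting}(1) along $\Lambda \onto \Lambda/\xi$ forces $\Fitt_\Lambda(I/I^2) \subset (\xi)$. Your argument instead works on the $\fH$-side, using the identification $\Ann_\fH(\I) = \ker(\fH \to \h)$ and then checking that the two composite maps $\fH \to \Lambda/\xi$ agree. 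This is correct, and in fact anticipates part of what the paper proves separately in Lemma~\ref{lem: AnnI} (namely that $\pi(\Ann_\fH(\I)) \subset (\xi)$). The paper's approach is slightly more self-contained for this lemma, relying only on the faithfulness of $I$ and repeated base change, whereas yours leans on the augmented-algebra structure of $\fH$; both are short and either is fine.
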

\begin{proof}
By Lemma \ref{lem: fitting}(1), we have 
\[
\pi(\Fitt_\fH(\I)) = \Fitt_\Lambda(\I \otimes_\fH \Lambda).
\]
Since $\I \otimes_\fH \Lambda = \I/\I^2 \cong I/I^2$, we have $\pi(\Fitt_\fH(\I)) = \Fitt_\Lambda(I/I^2)$. 

Since $I$ is a faithful $\h$-module, Lemma \ref{lem: fitting}(2) implies that $\Fitt_\h(I)=0$. Applying Lemma \ref{lem: fitting}(1), we have $\Fitt_{\h/I}(I/I^2)=0$. Recalling that $\h/I \cong \Lambda/\xi$, another application of the same lemma gives $\Fitt_{\Lambda}(I/I^2) \subset (\xi)$. 
\end{proof}

\begin{prop}
\label{prop: X=I/I2}
Assume that $X_\theta=0$. Then the $\Lambda$-modules $X_\chi(1), \cJ/\cJ^2,\I/\I^2$, and $I/I^2$ are all isomorphic and they all have Fitting ideal over $\Lambda$ equal to $(\xi)$.
\end{prop}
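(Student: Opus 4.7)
The plan is to build a chain of surjections $X_\chi(1) \twoheadrightarrow \cJ/\cJ^2 \twoheadrightarrow \I/\I^2 \cong I/I^2$, show that all the Fitting ideals equal $(\xi)$, and then upgrade the surjections to isomorphisms. First, I would combine the surjection in \eqref{eq:key_surj} with Proposition \ref{prop:BandC} to produce $X_\chi(1) \cong B^\red \otimes_\Lam C^\red \twoheadrightarrow \cJ/\cJ^2$, using that the hypothesis $X_\theta = 0$ forces $C^\red \cong \Lam$ to be free of rank $1$, so tensoring with $C^\red$ is trivial, and then using $B^\red \cong X_\chi(1)$. The surjection $\phi$ from Proposition \ref{prop:Rord to fH}, which sends $\cJ$ onto $\I$, then yields $\cJ/\cJ^2 \twoheadrightarrow \I/\I^2$, while the isomorphism $\I \cong I$ furnishes $\I/\I^2 \cong I/I^2$.

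Next, taking Fitting ideals (which decrease under surjections) and combining with Lemma \ref{lem: fitt of I} gives
\[
\Fitt_\Lam(X_\chi(1)) \subset \Fitt_\Lam(\cJ/\cJ^2) \subset \Fitt_\Lam(I/I^2) \subset (\xi).
\]
The key step is to show $\Fitt_\Lam(X_\chi(1)) = (\xi)$, which would force equality throughout. The Iwasawa Main Conjecture gives $\Ch_\Lam(X_\chi(1)) = (\xi_\chi) = (\xi)$, and since $\chi = \omega^{-1}\theta$ is odd, a classical theorem in Iwasawa theory ensures $X_\chi$ has no nonzero finite $\Lam$-submodule (a property unaffected by the twist $(1)$). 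By Lemma \ref{lem: Lambda-modules}, $X_\chi(1)$ is then of type $1$, so $\Fitt_\Lam(X_\chi(1)) = \Ch_\Lam(X_\chi(1)) = (\xi)$.

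Finally, to upgrade the first surjection to an isomorphism, let $K$ be its kernel. Multiplicativity of characteristic ideals on torsion $\Lam$-modules gives $\Ch_\Lam(K) \cdot \Ch_\Lam(\cJ/\cJ^2) = \Ch_\Lam(X_\chi(1)) = (\xi)$, and since $\Ch_\Lam(\cJ/\cJ^2) \supset \Fitt_\Lam(\cJ/\cJ^2) = (\xi)$, I conclude $\Ch_\Lam(K) = \Lam$, so $K$ is pseudo-null (i.e., finite). Because $X_\chi(1)$ has no nonzero finite submodule, $K = 0$; the identical reasoning upgrades $\cJ/\cJ^2 \twoheadrightarrow I/I^2$ to an isomorphism. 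The hard part will be the classical input that $X_\chi(1)$ has no nonzero finite $\Lam$-submodule: without it, the inclusion $\Fitt_\Lam(X_\chi(1)) \subset \Ch_\Lam(X_\chi(1))$ could be strict, and the Fitting-ideal sandwich above would fail to collapse the entire chain to $(\xi)$.
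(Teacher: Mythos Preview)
Your approach matches the paper's: build the chain of surjections from $X_\chi(1)$ down to $I/I^2$ via \eqref{eq:key_surj} and Proposition~\ref{prop:BandC}, invoke Lemma~\ref{lem: fitt of I} for the containment $\Fitt_\Lambda(I/I^2)\subset(\xi)$, and use that $X_\chi(1)$ has no nonzero finite submodule (the paper attributes this to Ferrero--Washington) to kill the kernel. The paper organizes things slightly differently---it first shows the \emph{composite} $\Theta:X_\chi(1)\twoheadrightarrow I/I^2$ is an isomorphism (finite kernel from $\Fitt_\Lambda(I/I^2)\subset(\xi)=\Ch(X_\chi(1))$, then zero kernel by Ferrero--Washington), which automatically forces all intermediate surjections in \eqref{eq: BC to I} to be isomorphisms, and only afterward computes the common Fitting ideal---but the ingredients are identical.

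There is one genuine slip in your upgrading step. From $\Ch_\Lambda(K)\cdot\Ch_\Lambda(\cJ/\cJ^2)=(\xi)$ together with only the \emph{containment} $\Ch_\Lambda(\cJ/\cJ^2)\supset(\xi)$, you cannot conclude $\Ch_\Lambda(K)=\Lambda$: if $\Ch_\Lambda(\cJ/\cJ^2)=(g)$ with $g$ a proper divisor of $\xi$, then $\Ch_\Lambda(K)=(\xi/g)\ne\Lambda$, and both hypotheses are satisfied. The fix is immediate: since you have already shown $\Fitt_\Lambda(\cJ/\cJ^2)=(\xi)$ is \emph{principal}, it coincides with the characteristic ideal (a principal ideal in the regular local ring $\Lambda$ is reflexive, hence equals its divisorial closure $\Ch$; this is the content of the lemma the paper cites as \cite[Lem.~A.6]{wake2}). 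That gives the needed equality $\Ch_\Lambda(\cJ/\cJ^2)=(\xi)$, and then $\Ch_\Lambda(K)=\Lambda$ follows. Alternatively, sidestep the issue entirely by running the argument once on the composite map, as the paper does.
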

\begin{rem}
Sharifi has studied a map from class groups to $I/I^2$ similar to the one that appears in the following proof; see the map of \cite[Thm.\ 5.2]{sharifi2007}. 
\end{rem}
\begin{proof}
From \eqref{eq:key_surj}, we have a surjections
\begin{equation}
\label{eq: BC to I}
B^\red \otimes_\Lambda C^\red \onto \cJ/\cJ^2 \onto \I/\I^2 \isoto I/I^2.
\end{equation}
By Proposition \ref{prop:BandC}, we have $B^\red \cong X_\chi(1)$ and $C^\red \cong \X_\chii^\#(1)$, and so by Proposition \ref{prop:X_control}, we have $C^\red \simeq \Lambda$. Hence we have a surjection
\[
\Theta: X_\chi(1) \onto I/I^2.
\]
But by the previous lemma, $\Fitt_\Lambda(I/I^2) \subset (\xi) = \mathrm{char}_\Lambda(X_\chi(1))$. This implies that the $\ker(\Theta)$ is finite (see \cite[Lem.\ A.7]{wake2} for example), and hence $\ker(\Theta)=0$, since $X_\chi(1)$ has no finite submodule by Ferrero-Washington \cite{FW1979}. Thus $\Theta$ is an isomorphism, and therefore so are all of the maps in \eqref{eq: BC to I}.

Since the modules are all isomorphic, it suffices to compute $\Fitt_\Lambda(X_\chi(1))$, which is well-known to be $(\xi)$. Indeed, by Ferrero-Washington and Lemma \ref{lem: Lambda-modules}, $X_\chi(1)$ has projective dimension 1. Therefore the ideal $\Fitt_\Lambda(X_\chi(1))$ is principal. This implies $\Fitt_\Lambda(X_\chi(1))=\mathrm{char}_\Lambda(X_\chi(1))$ (see \cite[Lem.\ A.6]{wake2}, for example) which is $(\xi)$ by definition.
\end{proof}

\begin{lem}
\label{lem: AnnI}
We have $\Ann_\fH(\I)=\ker(\fH \onto \h)$, and, in particular, $\Ann_\fH(\I) \not \subset \m_\Lambda\fH$. The restriction of $\pi$ to $\Ann_\fH(\I)$ induces an isomorphism
\[
\pi|_{\Ann}:\Ann_\fH(\I) \isoto (\xi).
\]
\end{lem}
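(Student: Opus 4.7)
The plan is to treat the identity $\Ann_\fH(\I) = \ker(\fH \onto \h)$ as already given (it is quoted in the summary preceding Lemma~\ref{lem: fitt of I}), so the remaining content is the isomorphism $\pi|_A : A \isoto (\xi)$, where I write $A := \Ann_\fH(\I)$, together with the non-containment $A \not\subset \m_\Lambda\fH$. I will establish the isomorphism first, from which the non-containment falls out as a rank count.

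The crux is a snake-lemma argument applied to the commutative ladder
\[
\xymatrix{
0 \ar[r] & A \ar[r] \ar[d]^{\pi|_A} & \fH \ar[r] \ar[d]^{\pi} & \h \ar[r] \ar[d]^{\bar\pi} & 0 \\
0 \ar[r] & (\xi) \ar[r] & \Lambda \ar[r] & \Lambda/\xi \ar[r] & 0
}
\]
obtained by pairing the augmentation $\pi : \fH \onto \Lambda$ with the augmentation $\bar\pi : \h \onto \Lambda/\xi$, whose kernel is $I$. Both $\pi$ and $\bar\pi$ are surjective, so the snake lemma collapses to
\[
0 \to \ker(\pi|_A) \to \I \to I \to \coker(\pi|_A) \to 0,
\]
and the natural map $\I \to I$ is an isomorphism (quoted in the same preamble), forcing $\ker(\pi|_A) = 0 = \coker(\pi|_A)$.

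For the non-containment, the isomorphism just obtained shows that $A \cong (\xi) \cong \Lambda$ is free of rank $1$ over $\Lambda$, while Hida theory gives that $\fH$ and $\h$ are free $\Lambda$-modules; the short exact sequence $0 \to A \to \fH \to \h \to 0$ then yields $\rk_\Lambda \fH = \rk_\Lambda \h + 1$. Were $A$ contained in $\m_\Lambda\fH$, the reduction $\fH/\m_\Lambda\fH \onto \h/\m_\Lambda\h$ would be an isomorphism of $\bF$-vector spaces whose dimensions compute these two ranks, a contradiction. I do not anticipate a serious obstacle: once the ladder is written the snake-lemma step is forced, and the rank count is routine; the only external inputs are the already-quoted $\I \isoto I$ and the $\Lambda$-flatness of $\fH$ and $\h$.
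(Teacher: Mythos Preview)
Your proof is correct. The paper's own proof simply cites \cite[Prop.~3.2.5]{WWE2015} for both the identity $\Ann_\fH(\I)=\ker(\fH\onto\h)$ and the isomorphism $\pi|_{\Ann}:\Ann_\fH(\I)\isoto(\xi)$, whereas you unpack the latter via the snake lemma from the facts already quoted in the text ($\I\isoto I$, $\fH/\I\cong\Lambda$, $\h/I\cong\Lambda/\xi$); this is the natural argument behind that citation. For the non-containment $A\not\subset\m_\Lambda\fH$, your rank-count reduction mod $\m_\Lambda$ is the same idea the paper uses, phrased there as: since $\fH\onto\h$ is a surjection of free $\Lambda$-modules of distinct rank, $A=\ker(\fH\onto\h)$ is a non-zero $\Lambda$-free direct summand of $\fH$.
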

\begin{proof}
That $\Ann_\fH(\I)=\ker(\fH \onto \h)$ and the fact that $\pi|_\Ann$ is an isomorphism follows from \cite[Prop.\ 3.2.5]{WWE2015}. To see that $\Ann_\fH(\I) \not \subset \m_\Lambda\fH$, note that $\fH \onto \h$ is a surjection of free $\Lambda$-modules of distinct rank, and so $\Ann_\fH(\I)=\ker(\fH \onto \h)$ is a non-zero $\Lambda$-free direct summand of $\fH$.
\end{proof}

\begin{thm}
\label{thm:R=T}
Assume that $X_\theta=0$. Then $\phi: R^\ord_\Db \to \fH$ is an isomorphism of complete intersections.
\end{thm}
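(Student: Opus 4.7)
The plan is to verify the hypothesis of Theorem~\ref{thm: criterion}, namely that $\phi(\Fitt_{R^\ord_\Db}(\cJ)) \not\subset \m_\Lambda\fH$. Essentially every computational ingredient has already been established: Proposition~\ref{prop: X=I/I2} controls the cotangent module $\cJ/\cJ^2$ and its Fitting ideal, and Lemma~\ref{lem: AnnI} controls $\Ann_\fH(\I)$ and its image under $\pi$. The proof will consist of sandwiching $\phi(\Fitt_{R^\ord_\Db}(\cJ))$ between the annihilator of $\I$ (from above) and a copy of $(\xi) \subset \Lambda$ (from below) so that Lemma~\ref{lem: AnnI} forces the containment to be an equality.

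First I would compute the image of $\Fitt_{R^\ord_\Db}(\cJ)$ under the augmentation $\pi_{R^\ord_\Db} = \pi \circ \phi$. By base-change for Fitting ideals (Lemma~\ref{lem: fitting}(1)) applied to the extension of scalars $R^\ord_\Db \to R^\ord_\Db/\cJ = \Lambda$, we have
\[
\pi_{R^\ord_\Db}\bigl(\Fitt_{R^\ord_\Db}(\cJ)\bigr) \;=\; \Fitt_\Lambda\bigl(\cJ \otimes_{R^\ord_\Db} \Lambda\bigr) \;=\; \Fitt_\Lambda(\cJ/\cJ^2),
\]
and by Proposition~\ref{prop: X=I/I2} this last Fitting ideal equals $(\xi)$. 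Consequently $\pi\bigl(\phi(\Fitt_{R^\ord_\Db}(\cJ))\bigr) = (\xi)$.

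Next I would put an upper bound on $\phi(\Fitt_{R^\ord_\Db}(\cJ))$. Since $\Fitt_{R^\ord_\Db}(\cJ) \subset \Ann_{R^\ord_\Db}(\cJ)$ by Lemma~\ref{lem: fitting}(2), and since $\phi$ is surjective with $\phi(\cJ) = \I$ (it is a map of augmented $\Lambda$-algebras by Proposition~\ref{prop:Rord to fH}), pushing forward gives
\[
\phi\bigl(\Fitt_{R^\ord_\Db}(\cJ)\bigr) \;\subset\; \Ann_\fH(\I).
\]
By Lemma~\ref{lem: AnnI}, the map $\pi|_{\Ann} \colon \Ann_\fH(\I) \isoto (\xi)$ is an isomorphism. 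Since $\phi(\Fitt_{R^\ord_\Db}(\cJ))$ sits inside $\Ann_\fH(\I)$ and already surjects onto $(\xi)$ under $\pi$, we conclude $\phi(\Fitt_{R^\ord_\Db}(\cJ)) = \Ann_\fH(\I)$.

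Finally, Lemma~\ref{lem: AnnI} records that $\Ann_\fH(\I) \not\subset \m_\Lambda\fH$, and hence $\phi(\Fitt_{R^\ord_\Db}(\cJ)) \not\subset \m_\Lambda\fH$ as well. Theorem~\ref{thm: criterion} then yields that $\phi$ is an isomorphism of complete intersections. The only subtle step is the squeeze in the last paragraph, where one must be careful that the equality $\pi(\phi(\Fitt_{R^\ord_\Db}(\cJ))) = (\xi)$ really forces $\phi(\Fitt_{R^\ord_\Db}(\cJ))$ to fill up all of $\Ann_\fH(\I)$; this uses essentially nothing more than the injectivity of $\pi|_{\Ann}$. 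All the deeper input---the hypothesis $X_\theta = 0$, the Galois-cohomological computation of $B^\red$ and $C^\red$, and the Ferrero--Washington theorem---has already been absorbed into Proposition~\ref{prop: X=I/I2}.
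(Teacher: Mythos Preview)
Your proof is correct and follows essentially the same route as the paper: both verify the hypothesis of Theorem~\ref{thm: criterion} by computing $\pi_{R^\ord_\Db}(\Fitt_{R^\ord_\Db}(\cJ))=(\xi)$ via Proposition~\ref{prop: X=I/I2}, establishing the containment $\phi(\Fitt_{R^\ord_\Db}(\cJ))\subset\Ann_\fH(\I)$ via Lemma~\ref{lem: fitting}(2) and surjectivity of $\phi|_\cJ$, and then using the isomorphism $\pi|_{\Ann}$ from Lemma~\ref{lem: AnnI} to squeeze this into an equality. The organization and even the commentary on where the depth lies match the paper's argument closely.
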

\begin{proof}
By Theorem \ref{thm: criterion} it suffices to show that 
\[
\phi(\Fitt_{R^\ord_\Db}(\cJ)) \not \subset \m_\Lambda\fH.
\]

By Lemma \ref{lem: fitting}(1) and Proposition \ref{prop: X=I/I2}, we have
\[
\pi_{R^\ord_\Db}(\Fitt_{R^\ord_\Db}(\cJ)) =  \Fitt_\Lambda(\cJ \otimes_{R^\ord_\Db} \Lambda)=  \Fitt_\Lambda(\cJ/\cJ^2) = (\xi).
\]

On the other hand, Lemma \ref{lem: fitting}(2) implies $\Fitt_{R^\ord_\Db}(\cJ) \subset \Ann_{R^\ord_\Db}(\cJ)$, and, since $\phi|_\cJ: \cJ \to \I$ is surjective, we have $\phi(\Ann_{R^\ord_\Db}(\cJ)) \subset \Ann_\fH(\I)$. This implies that $\phi(\Fitt_{R^\ord_\Db}(\cJ)) \subset \Ann_\fH(\I)$. Now we have 
\[
\pi_{R^\ord_\Db}(\Fitt_{R^\ord_\Db}(\cJ)) = \pi(\phi(\Fitt_{R^\ord_\Db}(\cJ))) = \pi|_{\Ann}(\phi(\Fitt_{R^\ord_\Db}(\cJ))).
\]
But we know that $\pi_{R^\ord_\Db}(\Fitt_{R^\ord_\Db}(\cJ))=(\xi)$, and $\pi(\Ann_\fH(\I))=(\xi)$, so we have
\[
\pi|_{\Ann}(\phi(\Fitt_{R^\ord_\Db}(\cJ))) = \pi|_{\Ann}(\Ann_\fH(\I)).
\]
Since $\pi|_{\Ann}$ is an isomorphism by Lemma \ref{lem: AnnI}, we conclude that $\phi(\Fitt_{R^\ord_\Db}(\cJ))=\Ann_\fH(\I)$. It then follows from Lemma \ref{lem: AnnI} that $ \phi(\Fitt_{R^\ord_\Db}(\cJ)) \not \subset \m_\Lambda\fH$.
\end{proof}
\begin{rem}
This answers in the affirmative a question of Sharifi \cite[\S5]{sharifi2009} whether $\fH$ is Gorenstein when $X_\theta=0$. As noted in \emph{loc.~cit.}, it follows from work of Ohta that $\fH$ is Gorenstein when $\X_\theta=0$, and so our result improves on Ohta's. It was proven in \cite[Thm.\ 1.2]{wake1} that $\fH$ is Gorenstein when $X_\theta=0$ under the additional assumption of Sharifi's conjecture.
\end{rem}

\section{Applications} 
\label{sec:app}

\subsection{Toward Sharifi's conjecture} We have the following immediate corollary. 

\begin{cor}
\label{cor: principal}
Assume that $X_\theta = 0$ and that $X_\chi(1)$ is cyclic. Then the ideals $\cJ \subset R^\ord_\Db$, $\I \subset \fH$ and $I \subset \h$ are all principal, and both $\fH$ and $\h$ are complete intersections.
\end{cor}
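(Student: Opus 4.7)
The plan is to assemble the conclusion from Theorem~\ref{thm:R=T}, Proposition~\ref{prop: X=I/I2}, and a short analysis of $\h$ as a quotient of $\fH$. Under $X_\theta=0$, Theorem~\ref{thm:R=T} already supplies the isomorphism $\phi: R^\ord_\Db \isoto \fH$ of complete intersections; this settles the complete intersection claim for $\fH$ outright and carries $\cJ$ onto $\I$. Combined with the $\fH$-module isomorphism $\I \isoto I$ recalled in \S\ref{subsec: duality} (see also Lemma~\ref{lem: AnnI}), the three principality claims for $\cJ \subset R^\ord_\Db$, $\I \subset \fH$, and $I \subset \h$ all collapse to the single statement that $I$ is principal in $\h$.

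For that, I would invoke Proposition~\ref{prop: X=I/I2}, which (still under $X_\theta=0$) gives an isomorphism $I/I^2 \cong X_\chi(1)$ of $\Lambda$-modules. The cyclicity hypothesis on $X_\chi(1)$ then makes $I/I^2$ cyclic over $\Lambda$ and hence over $\h$; since $I \subseteq \m_\h$ we have $I^2 \subseteq \m_\h I$, so the residue-field vector space $I/\m_\h I$ is cyclic, and Nakayama's lemma applied to the finitely generated $\h$-module $I$ delivers the principality.

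What remains -- and what I expect to be the main obstacle -- is the complete intersection claim for $\h$. My approach is to present $\fH$ explicitly. Choosing a generator $e$ of $\I$, the relations $\fH = \Lambda + \I$ and $\I = e\fH$ iterate to show that $\fH$ is topologically generated over $\Lambda$ by $e$, producing a continuous surjection $\Lambda\lb y\rb \onto \fH$ with $y \mapsto e$. The source is a regular local ring of dimension $3$ and $\fH$ has dimension $2$, so the kernel has height one; since $\fH$ is a complete intersection, this kernel must be generated by a regular sequence of length one, i.e., principal, say $(f)$. The identity $\fH/(y) = \fH/\I = \Lambda$ forces $f(0) = 0$, so $f = y\cdot g(y)$ for some $g \in \Lambda\lb y\rb$. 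A direct calculation then gives $\Ann_\fH(\I) = \Ann_\fH(e) = (g(e))$, and Lemma~\ref{lem: AnnI} identifies $\pi(g(e)) = g(0)$ with a generator of $(\xi_\chi) \subset \Lambda$; in particular $g$ lies in the maximal ideal of $\Lambda\lb y\rb$. Consequently
\[
\h \;=\; \fH/\Ann_\fH(\I) \;\cong\; \Lambda\lb y\rb/(g(y))
\]
is a hypersurface in a regular local ring, hence a complete intersection.

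The only genuinely nontrivial input beyond the two cited results is the principality of the kernel of $\Lambda\lb y\rb \onto \fH$; this is where the complete intersection structure of $\fH$ enters essentially, via the standard fact that for any surjection $R \onto \fH$ from a regular local ring with $\dim R - \dim \fH = 1$ the kernel is generated by a single element. Once the presentation $\fH \cong \Lambda\lb y\rb/(yg)$ is in hand, both the identification $\Ann_\fH(\I) = (g(e))$ and the recognition of $\Lambda\lb y\rb/(g)$ as a hypersurface are formal.
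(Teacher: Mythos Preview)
Your argument is correct. The principality of $\cJ$, $\I$, $I$ and the complete intersection property of $\fH$ are handled exactly as the paper does (Proposition~\ref{prop: X=I/I2} plus Nakayama, and Theorem~\ref{thm:R=T}). The only real divergence is in proving that $\h$ is a complete intersection.

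The paper's route here is considerably shorter: since $I$ is a \emph{faithful} $\h$-module (recalled just before Lemma~\ref{lem: fitt of I}), any principal generator of $I$ is automatically a non-zero-divisor in $\h$; then $\h/I \cong \Lambda/\xi_\chi$ is a complete intersection, and one invokes the standard fact (e.g.\ \cite[Thm.\ 2.3.4(a)]{BH1993}) that if $R/(x)$ is a complete intersection for a regular element $x$, so is $R$. Your approach instead pushes the complete intersection structure of $\fH$ down to $\h$ by building an explicit hypersurface presentation $\fH \cong \Lambda\lb y\rb/(yg)$ and identifying $\ker(\fH \to \h)$ with the image of $(g)$, whence $\h \cong \Lambda\lb y\rb/(g)$. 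This is valid and yields more explicit structural information (presentations of both $\fH$ and $\h$), but it is longer and leans on the same Lemma~\ref{lem: AnnI} the paper uses elsewhere. A minor point: the isomorphism $\I \isoto I$ is not in \S\ref{subsec: duality}; it is recalled just before Lemma~\ref{lem: fitt of I}.
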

\begin{proof}
By Proposition \ref{prop: X=I/I2} we have $X_\chi(1) \simeq \cJ/\cJ^2 \cong \I/\I^2 \cong I/I^2$, and so, if $X_\chi(1)$ is cyclic, then each ideal is principal by Nakayama's lemma. Since $I$ is faithful as an $\h$-module, it must be generated by a non-zero divisor. Then since $\h/I \cong \Lambda/\xi$ is complete intersection and $I$ is generated by a regular sequence, $\h$ is complete intersection (see \cite[Thm.\ 2.3.4(a), pg.\ 75]{BH1993}, for example). Theorem \ref{thm:R=T} implies that $\fH$ is complete intersection.
\end{proof}

These results have applications to Sharifi's conjecture. Recall that Sharifi's conjecture states that two maps $\Upsilon$ and $\varpi$ are isomorphisms \cite{sharifi2011}. In \cite{FK2012, fks2014} this conjecture was refined to state that $\Upsilon$ and $\varpi$ are mutually inverse. (See also \cite[\S8.1]{WWE2015} for a review of Sharifi's conjecture using the same notation as this paper.)

\begin{cor}
\label{cor: sharifi}
Consider the maps
\[
\Upsilon: X_\chi(1) \to H^-/IH^- \text{ and } \varpi: H^-/IH^- \to X_\chi(1) 
\]
defined by Sharifi. If $X_\theta = 0$, then $\Upsilon$ is an isomorphism. If, in addition, $X_\chi(1)$ is cyclic, then $\varpi$ is an isomorphism as well. Finally, if, in addition, $\xi_\chi$ has no multiple root, then they are mutual inverses.
\end{cor}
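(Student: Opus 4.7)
The strategy is to recognize that Sharifi's maps $\Upsilon$ and $\varpi$ can be expressed in terms of the pseudorepresentation-theoretic gadgets developed in \S\ref{sec:OP}--\S\ref{subsec:redrep}: $\Upsilon$ arises from the comparison between the universal reducible and universal ordinary GMAs, while $\varpi$ arises by a duality involving Ohta's theory. With this dictionary, which is the content of \cite[\S 8]{WWE2015}, each of the three assertions reduces to repackaging the structural results already established in this paper.

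\emph{First assertion.} Following \cite[\S 8]{WWE2015}, we identify $\Upsilon$ with the composition
\[
X_\chi(1) \isoto B^\red \lra B^\ord/\cJ B^\ord \lra H^-/IH^-,
\]
where the first arrow is Proposition \ref{prop:BandC}(1), the second is the canonical projection coming from $E^\ord_\Db \onto E^\red_\Db$, and the third is induced by the GMA structure on $\End_\h(H)$ from Example \ref{eg:rho_H} together with $\phi$ of Proposition \ref{prop:Rord to fH}. Under the hypothesis $X_\theta = 0$, Theorem \ref{thm:R=T} makes $\phi$ an isomorphism, and hence the third arrow is an isomorphism of $\fH$-modules after the identification $H^- \cong \h^\vee$. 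Moreover, the proof of Proposition \ref{prop: X=I/I2} shows that the composite $B^\red \otimes_\Lambda C^\red \to \cJ/\cJ^2$ of \eqref{eq:key_surj} is an isomorphism; combined with $C^\red$ being free of rank $1$ (Proposition \ref{prop:BandC}(2)), this forces the second arrow above to be an isomorphism. Hence $\Upsilon$ is a composite of isomorphisms.

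\emph{Second assertion.} Assume in addition that $X_\chi(1)$ is cyclic. Then Corollary \ref{cor: principal} gives that $\h$ is Gorenstein, so the dualizing module is free of rank one; this yields $H^- \cong \h^\vee \cong \h$ as $\h$-modules, and thus $H^-/IH^- \cong \h/I \cong \Lambda/\xi_\chi$. The source and target of $\varpi$ are now both cyclic $\Lambda$-modules with characteristic ideal $(\xi_\chi)$. Since $\varpi$ is $\Lambda$-linear and known to be surjective (either from the direct construction of \cite{ohta2005} or by duality from the surjectivity of $\Upsilon$ combined with the Fukaya-Kato compatibility in \cite{FK2012,fks2014}), any surjection between two cyclic $\Lambda$-modules of the same characteristic ideal is an isomorphism.

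\emph{Third assertion.} With both $\Upsilon$ and $\varpi$ isomorphisms between copies of $\Lambda/\xi_\chi$, the composite $\varpi \circ \Upsilon$ lies in $\End_\Lambda(\Lambda/\xi_\chi) \cong \Lambda/\xi_\chi$ and equals multiplication by some unit $u \in (\Lambda/\xi_\chi)^\times$. By the Fukaya-Kato-Sharifi framework of \cite{FK2012,fks2014}, this composite is controlled by explicit $p$-adic $L$-values and becomes $1$ at each minimal prime of $\Lambda/\xi_\chi$. Under the assumption that $\xi_\chi$ has no multiple root, $\Lambda/\xi_\chi$ is reduced and hence embeds into the product of its residue fields at minimal primes, so the component-wise equalities $u = 1$ promote to the global identity $u = 1$, giving $\varpi = \Upsilon^{-1}$. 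The main obstacle in this last step is importing the precise description of $\varpi \circ \Upsilon$ from the Fukaya-Kato-Sharifi framework compatibly with the identifications of the first paragraph; the reducedness argument afterward is formal.
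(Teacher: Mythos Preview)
Your approach differs substantially from the paper's, and the differences expose real gaps.

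\textbf{First assertion.} The paper's argument is a one-liner: Theorem~\ref{thm:R=T} shows $\fH$ is a complete intersection, hence Gorenstein, and then \cite[Prop.~4.10]{sharifi2011} gives directly that $\Upsilon$ is an isomorphism. You instead try to realize $\Upsilon$ as a three-step composite through $B^\ord/\cJ B^\ord$ and argue each step separately. Your justification of the middle arrow is fine, but the claim that the third arrow $B^\ord/\cJ B^\ord \to H^-/IH^-$ is an isomorphism ``because $\phi$ is an isomorphism'' is not warranted: $\phi$ matches the scalar rings $R^\ord_\Db \cong \fH$, but the induced map $E^\ord_\Db \to \End_\h(H)$ is a map of GMAs over the \emph{further} quotient $\h$, and nothing you have said forces the $B$-components to agree modulo $\cJ$. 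Establishing this would require a separate comparison of $E^\ord_\Db$ with $\End_\h(H)$, which is essentially what lies behind Sharifi's Prop.~4.10 and is not a formal consequence of $R^\ord_\Db \cong \fH$.

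\textbf{Second assertion.} The paper observes that $\h$ Gorenstein implies Fukaya--Kato's condition $C(\h)$ \cite[\S7.2.10]{FK2012}, and then invokes their Theorem~7.2.8. Your route needs $\varpi$ to be surjective, which you assert is ``known'' from Ohta's construction or from duality with $\Upsilon$; neither claim is a standard fact, and the second would itself require the Fukaya--Kato compatibility you are trying to avoid. Without an independent proof of surjectivity this step is incomplete.

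\textbf{Third assertion.} Here you are morally retracing Fukaya--Kato's own argument (reducedness of $\Lambda/\xi_\chi$ lets local identities globalize), and you acknowledge that the substance lies in importing their computation of $\varpi\circ\Upsilon$. The paper simply cites \cite[Thm.~7.2.7]{FK2012}; your sketch is not wrong but does not add content beyond that citation.

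In short, the paper proves the corollary by verifying ring-theoretic hypotheses ($\fH$ and $\h$ Gorenstein) and then invoking off-the-shelf theorems of Sharifi and Fukaya--Kato. Your attempt to replace those black boxes with GMA-level arguments is appealing, but as written the key steps (the third arrow for $\Upsilon$, surjectivity for $\varpi$) are asserted rather than proved.
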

\begin{proof}
By Theorem \ref{thm:R=T}, if $X_\theta = 0$, then $\fH$ is Gorenstein. It is known that if $\fH$ is Gorenstein, then $\Upsilon$ is an isomorphism (see \cite[Prop.\ 4.10]{sharifi2011}). If $X_\chi(1)$ is cyclic, then the previous corollary implies that $\fH$ and $\h$ are complete intersections, and hence Gorenstein. The result now follows by work of Fukaya-Kato.

Indeed, since $\h$ is Gorenstein and $H^-$ is a dualizing module over $\h$ (see \cite[Cor.\ 3.4.2]{WWE2015}), we see that $H^-/IH^- \simeq \h/I \cong \Lambda/\xi$, which has no $p$-torsion. Moreover, the fact that $\h$ is Gorenstein implies the condition $C(\h)$ of Fukaya-Kato by \cite[\S7.2.10]{FK2012}. Then the final two claims follow from Theorems 7.2.8 and 7.2.7 of \emph{loc.\ cit.}, respectively. 
\end{proof}

\subsection{Residually reducible modularity}
\label{subsec:Skinner-Wiles}
In this subsection, we deduce from the main Theorem \ref{thm:R=T} a new proof of a known modularity result for ordinary Galois representations. This may be viewed as a verification that our notion of ordinary GMA representation in \S\ref{subsec:ord_GMA_reps} can be usefully applied to ordinary Galois representations as they are usually known. We emphasize that from now on we use the word ``ordinary'' exclusively to refer to Definition \ref{defn:ord_C-H}. 

\begin{rem}
\label{rem:conv_suspend}
Unlike the rest of this paper, we are not assuming the running assumptions about $\theta$, $p$, and $N$ from \S\ref{subsec:iwasawa} in this subsection. Rather, we will verify that the running assumptions follow from the assumptions of Theorem \ref{thm: our sw}. 
\end{rem}

One of the main results of Skinner and Wiles' work \cite{SW1999} is 
\begin{thm}[Skinner-Wiles]
\label{thm:sw}
Suppose that $\rho: G_{\bQ} \ra \Aut_F(V)$ is continuous, irreducible, and ramified at finitely many primes, where $V$ is a 2-dimensional $F$-vector space, $F/\bQ_p$ is a finite extension, and $p$ is odd. Suppose that $\bar\rho^{ss} \simeq \omega^{-1} \oplus \theta^{-1}$ and that 
\begin{enumerate}
\item $\theta \vert_{G_p} \neq \omega \vert_{G_p}$,
\item $\rho \vert_{I_p}$ is conjugate to $\begin{pmatrix} \kcyc^{-1}\vert_{I_p} & 0 \\ * & *\end{pmatrix}$, 
\item $\det\rho = \tau \kcyc^{k-3}$ is odd, where $\tau$ is a finite order character and $k \geq 2$.
\end{enumerate}
Then $\rho$ comes from a modular form.
\end{thm}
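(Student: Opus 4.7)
The plan is to deduce modularity of $\rho$ from the isomorphism $\phi: R^\ord_\Db \isoto \fH$ of Theorem \ref{thm:R=T}, at the cost of assuming the Vandiver-type hypothesis $X_\theta = 0$ (which is not part of the Skinner--Wiles hypotheses, and is what restricts the conclusion to Skinner--Wiles ``in some cases''). First I would check that the running hypotheses of \S\ref{subsec:iwasawa} on $(\theta, N, p)$ follow from or can be added to the hypotheses of the theorem: after possibly enlarging the residue field $\bF$, Skinner--Wiles condition (1) that $\theta\vert_{G_p}\ne \omega\vert_{G_p}$ is equivalent to the locally $p$-distinguished hypothesis (b) (as noted in Remark \ref{rem: condition (b) is needed}), while primitivity of $\theta$ and the exclusion of $(N,\theta)=(1,\omega^2)$ must be imposed or separately arranged.

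Since $\rho$ is ramified at only finitely many primes, it factors through $G_{\bQ,S}$ for some finite $S \supseteq \{v\mid Np\}\cup\{\infty\}$. Picking a $G_{\bQ,S}$-stable $\cO$-lattice in $V$ (with $\cO$ the ring of integers of $F$), I would form the associated pseudorepresentation $D_\rho = (\tr\rho,\det\rho) : G_{\bQ,S} \to \cO$, which deforms $\Db$. The crucial observation is that $D_\rho$ is ordinary in the sense of Definition \ref{defn:ord_PsR}: by Example \ref{eg:ordinary representation}, it is enough to exhibit a $G_p$-quotient character of $\rho$ whose restriction to $I_p$ equals $\kcyc^{-1}\vert_{I_p}$. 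This is exactly what Skinner--Wiles condition (2) provides: the triangular form with $*$ in the lower-left position puts $\kcyc^{-1}\vert_{I_p}$ on the quotient, not the sub. Hence $D_\rho$ is classified by a local $\cO$-algebra map $R^\ord_\Db \to \cO$, and composing with $\phi^{-1}$ produces an $\cO$-valued Hecke eigensystem $\fH\to\cO$.

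By Hida theory applied to the Eisenstein component $\fH$, this eigensystem is the specialization of a $\Lambda$-adic ordinary eigenform at the arithmetic point of $\Spec\Lambda$ determined by the pair $(k,\tau)$ from the determinant condition $\det\rho=\tau\kcyc^{k-3}$; the specialization is then a classical ordinary cuspidal eigenform $f$. Its associated Galois representation $\rho_f$ has the same trace and determinant as $\rho$ by construction, so by irreducibility of $\rho$ together with Chebotarev density and Brauer--Nesbitt, $\rho\simeq\rho_f$ and $\rho$ is modular. The main obstacle is conceptual rather than technical and is precisely what forces the ``in some cases'' caveat: one needs $X_\theta=0$ to invoke Theorem \ref{thm:R=T}. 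A secondary bookkeeping issue, which should be routine, is reconciling the normalizations so that the arithmetic specialization of $\fH$ at weight $k$ and finite-order character $\tau$ indeed produces the determinant $\tau\kcyc^{k-3}$ that appears on the Skinner--Wiles side.
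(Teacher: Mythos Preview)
First, note that the paper does not itself prove Theorem~\ref{thm:sw}: it is quoted as a result of \cite{SW1999}. What the paper proves is the weaker Theorem~\ref{thm: our sw}, under the additional hypotheses (i)--(iv). Your proposal is essentially an outline of that proof, and you correctly anticipate that extra hypotheses (the Vandiver-type condition, minimality of the level, primitivity of $\theta$, etc.) must be imposed.

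However, there is a genuine gap in your argument that $D_\rho$ is ordinary. You claim that Example~\ref{eg:ordinary representation} reduces this to exhibiting a $G_p$-quotient character $\alpha$ with $\alpha\vert_{I_p} = \kcyc^{-1}\vert_{I_p}$, but that example requires \emph{in addition} that $\bar\alpha = \omega^{-1}$: the GMA ordering in \eqref{eq: GMA defm} fixes which diagonal character must appear as the twist-unramified quotient. Condition~(2) of Theorem~\ref{thm:sw} is only an $I_p$-statement; one must first promote it to a $G_p$-quotient (the paper does this via Clifford theory in Lemma~\ref{lem:SW ord implies p-reducible}) and then check the residual condition. When $\chi = \omega^{-1}\theta$ is ramified at $p$ the two residual characters are already distinguished on $I_p$ and the condition $\bar\alpha = \omega^{-1}$ is automatic. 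But when $\chi$ is unramified at $p$, both $\omega^{-1}\vert_{I_p}$ and $\theta^{-1}\vert_{I_p}$ coincide, and the $G_p$-quotient may reduce to $\theta^{-1}$ rather than $\omega^{-1}$; in that case $\rho$ is \emph{not} ordinary in the sense of Definition~\ref{defn:ord_C-H}, and your map $R^\ord_\Db \to \cO$ does not exist.

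The paper repairs this in Lemma~\ref{lem:prime} by twisting: if $\rho$ fails to be ordinary, then $\rho' = \rho \otimes \chi$ is ordinary as a deformation of the different residual pseudorepresentation $\Db' = \psi(\omega^{-1} \oplus \theta'^{-1})$ with $\theta' = \omega^2\theta^{-1}$, and one applies Theorem~\ref{thm:R=T} to $\Db'$ instead. This is precisely why Theorem~\ref{thm: our sw} carries the second Vandiver hypothesis $X_{\theta^{-1}\omega^2} = 0$ in condition~(iv), and why its proof breaks into two cases. Your outline covers only Case~1.
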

\begin{rem}
Our conventions for modular forms and their Galois representations follow those of \cite{FK2012}, and differ slightly from those of \cite{SW1999}. For us, the phrase \emph{$\rho$ comes from a modular form} means that $\rho$ is isomorphic to $H\otimes_\h F$ where the map $\h \to F$ is induced by an cuspidal eigenform $f$ with coefficients in $F$. By \cite[1.2.9, 1.5.8]{FK2012}, we see that if $\rho$ is modular, then the weight of $f$ must be the integer $k$ of (3).

The representation $\rho$ comes from a modular form in our sense if and only if $\rho \otimes \kcyc$ comes from a modular form in the sense of \cite{SW1999}. With this in mind, the theorem is a restatement of the theorem stated on \cite[pg.~6]{SW1999}.
\end{rem}

We can give a new proof of this result in certain cases, following directly from Theorem \ref{thm:R=T}. Among the additional restrictions, the serious ones are 
\begin{enumerate}[label=(\roman*), leftmargin=2.5em] \addtocounter{enumi}{2}
\item that $\rho$ is a lift of ``minimal level'' 
\item that a Vandiver conjecture type condition holds for the relevant isotypic parts of the class group.
\end{enumerate}

Now fix $\rho$ and $\theta$ as in Theorem \ref{thm:sw}. Let $N$ be the prime-to-$p$ part of the conductor of $\theta$, and write $\chi=\omega^{-1}\theta$. We will treat these both as characters of $G_\bQ$ and as Dirichlet characters of modulus $Np$.

\begin{thm}
\label{thm: our sw}
In addition to the conditions of Theorem \ref{thm:sw}, assume that 
\begin{enumerate}[label=(\roman*)]
\item $p \nmid \phi(N)$ and $p \geq 5$;  
\item $\theta$ is ramified at $p$ when $N > 1$; 
\item $\rho$ is ramified only at primes dividing $Np$; 
\item $X_\theta = 0$ and, if $\chi$ is unramified at $p$, assume that $X_{\theta^{-1}\omega^2} = 0$. 
\end{enumerate}
Then $\rho$ comes from a modular form.
\end{thm}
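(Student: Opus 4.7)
The strategy is to convert the Galois-theoretic hypotheses on $\rho$ into input for the $R=T$ theorem (Theorem~\ref{thm:R=T}) and then read off modularity from the Hecke side. Concretely, the aim is (a) to verify that the running hypotheses of \S\ref{subsec:iwasawa} hold for $(\theta,p,N)$, (b) to produce from $\rho$ an ordinary pseudodeformation $D_\rho\colon G_{\bQ,S}\to\cO_F$ of $\Db$, (c) to apply the isomorphism $R^\ord_\Db \isoto \fH$, and (d) to identify the resulting Hecke eigensystem with a classical cuspidal eigenform of weight $k$ whose attached Galois representation is $\rho$.

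For (a), the evenness of $\theta$ is computed from $\det\bar\rho(c)=-1$ together with $\omega(c)=-1$, where $c$ is complex conjugation. The condition $p\nmid N\phi(N)$ comes from (i) and the fact that $N$ is the prime-to-$p$ conductor of $\theta$. Primitivity of $\theta$ (condition (a) of \S\ref{subsec:iwasawa}) follows from (ii) when $N>1$ and is built into the definition of $N$ when $N=1$; condition (c) of \S\ref{subsec:iwasawa} in the $N=1$ case is covered by (iv), and condition (b) is precisely condition (1) of Theorem~\ref{thm:sw}.

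For (b), choose a $G_{\bQ,S}$-stable $\cO_F$-lattice $L\subset V$. Condition (2) of Theorem~\ref{thm:sw} yields an $I_p$-stable sub-line on which $I_p$ acts by $\kcyc^{-1}|_{I_p}$. When $\chi$ is ramified at $p$, the residual $I_p$-characters $\omega^{-1}|_{I_p}$ and $\theta^{-1}|_{I_p}$ are distinct, so this line is the unique $I_p$-eigenline with that character and is therefore $G_p$-stable. Saturating this line in $L$ and completing it to an $\cO_F$-basis exhibits $\rho$ as an ordinary $\cO_F$-valued GMA representation in the sense of Definition~\ref{defn:ord_C-H}, so $D_\rho := \psi_{\GMA}(\rho)$ is an ordinary pseudodeformation of $\Db$. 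When $\chi$ is unramified at $p$, the $I_p$-action no longer distinguishes the two residual characters, and one must leverage the second half of assumption (iv), namely $X_{\theta^{-1}\omega^2}=0$, to produce a $G_p$-stable ordinary line. This local-at-$p$ unramified case is the main obstacle in the argument.

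For (c) and (d), the universal property of $R^\ord_\Db$ yields a local $W(\bF)$-algebra map $R^\ord_\Db \to \cO_F$ realizing $D_\rho$. The first half of (iv) provides the hypothesis $X_\theta=0$ of Theorem~\ref{thm:R=T}, so $\phi$ is an isomorphism and we obtain a $\Lambda$-algebra homomorphism $\fH\to\cO_F$. Since $\rho$ is irreducible, $D_\rho$ is not reducible, so this map does not factor through the Eisenstein quotient $\fH/\I\cong\Lambda$ and therefore factors through the cuspidal quotient $\h$. By Hida theory, the resulting map $\h\to\cO_F$ corresponds to an ordinary cuspidal eigenform $f$; the $\Lambda$-algebra structure on $\cO_F$, which is determined by $\det\rho=\tau\kcyc^{k-3}$ (after the Tate twist implicit in the paper's cohomological conventions), selects the weight-$k$ specialization. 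Finally, the attached Galois representation $\rho_f$ satisfies $\psi(\rho_f)=D_\rho=\psi(\rho)$, and irreducibility of $\rho$ forces $\rho\simeq\rho_f$, so $\rho$ is modular.
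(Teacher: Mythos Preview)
Your overall strategy matches the paper's, but there is a genuine gap in step (b) in the case where $\chi$ is unramified at $p$. You write that one must ``leverage the second half of assumption (iv), namely $X_{\theta^{-1}\omega^2}=0$, to produce a $G_p$-stable ordinary line.'' That is not how this hypothesis enters. Once one knows (via Clifford theory, as in Lemma~\ref{lem:SW ord implies p-reducible}) that $\rho|_{G_p}$ has a unique quotient character $\alpha$ with $\alpha|_{I_p}=\kcyc^{-1}|_{I_p}$, one must still check $\bar\alpha=\omega^{-1}$ to match the ordering convention of Definition~\ref{defn:ord_C-H} and Example~\ref{eg:ordinary representation}. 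When $\chi$ is unramified at $p$ the residual characters $\omega^{-1}$ and $\theta^{-1}$ agree on $I_p$, so it can happen that $\bar\alpha=\theta^{-1}$; in that case $\rho$ is simply \emph{not} ordinary as a deformation of $\Db$, and no class-group vanishing repairs this. The paper instead twists: $\rho':=\rho\otimes\chi$ is then an ordinary deformation of the \emph{different} residual pseudorepresentation $\Db'=\psi(\omega^{-1}\oplus\theta'^{-1})$ with $\theta':=\omega^2\theta^{-1}$ (Lemma~\ref{lem:prime}), one checks that $\theta'$ also satisfies the running hypotheses (Lemma~\ref{lem:AS}), and Theorem~\ref{thm:R=T} is applied with $\theta'$ in place of $\theta$ using $X_{\theta'}=X_{\theta^{-1}\omega^2}=0$. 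Modularity of $\rho'$ then gives modularity of $\rho$. So the second vanishing hypothesis in (iv) is the input to a second instance of $R=T$, not a tool for fixing the ordinary structure on $\rho$ itself.

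Two smaller points. First, your claim that condition (c) of \S\ref{subsec:iwasawa} is ``covered by (iv)'' is not right: $X_{\omega^2}=0$ is itself a case of Vandiver, so $X_\theta=0$ does not exclude $\theta=\omega^2$. The paper instead uses the existence of the irreducible $\rho$: a Ribet-type lattice argument forces $X_\chi\ne 0$, while Stickelberger gives $X_{\omega^{\pm 1}}=0$, ruling out $\theta\in\{1,\omega^2\}$ when $N=1$ (Lemma~\ref{lem:AS}). Second, condition (2) of Theorem~\ref{thm:sw} exhibits $\kcyc^{-1}|_{I_p}$ as a \emph{quotient} of $\rho|_{I_p}$, not a sub; and your uniqueness-of-eigenline argument for passing from $I_p$ to $G_p$ invokes distinctness of the \emph{residual} $I_p$-characters, whereas what is actually needed (and what the paper's Clifford-theory argument supplies) is the reducibility of $\rho|_{G_p}$ itself, which uses distinctness of the characteristic-zero $I_p$-characters $\kcyc^{-1}$ and $\tau\kcyc^{k-2}$.
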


The proof of Theorem \ref{thm: our sw} relies on applying Theorem \ref{thm:R=T}. In order to apply that theorem, we will need to relate condition (2) of Theorem \ref{thm:sw} -- which is the ``ordinary'' condition in \cite{SW1999} -- to our ordinary condition on GMA representations and on pseudorepresentations, established in  in \cite{WWE2015} and reviewed in \S\ref{sec:OP}. Note that condition (1) implies condition (b) of \S \ref{subsec:iwasawa}, so our definition of ordinary applies in this situation. We will see that either $\rho$ is ordinary in our sense, or $\chi$ is unramified at $p$ and $\rho \otimes \chi$ is ordinary in our sense. 

The following lemma is standard, following from Clifford theory. We provide a proof for lack of a reference. 
\begin{lem}
\label{lem:SW ord implies p-reducible}
Given any $\nu$ satisfying the conditions of Theorem \ref{thm:sw}, $\nu\vert_{G_p}$ is reducible. Moreover, there is a unique quotient character $\alpha$ of $\nu|_{G_p}$ such that $\alpha|_{I_p}=\kcyc^{-1}|_{I_p}$.
\end{lem}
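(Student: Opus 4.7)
My plan is to exploit the local condition (2) from Theorem \ref{thm:sw}, which supplies an $I_p$-stable line $L \subset V$ such that $V/L$ has $I_p$-character $\kcyc^{-1}|_{I_p}$; I will show this line $L$ is $G_p$-stable and is the unique $G_p$-stable line producing a quotient with the desired $I_p$-character. Let $\delta \colon I_p \to F^\times$ denote the character by which $I_p$ acts on $L$. My first step will be to pin down $\delta|_{I_p}$ explicitly using condition (3): the relation $\det\nu|_{I_p} = \delta \cdot \kcyc^{-1}|_{I_p}$ yields $\delta|_{I_p} = \tau \kcyc^{k-2}|_{I_p}$, which is the restriction of a $G_p$-character and hence is automatically invariant under $G_p$-conjugation on $I_p$.

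The key numerical input will be that $\delta|_{I_p} \ne \kcyc^{-1}|_{I_p}$. Their ratio equals $\tau \kcyc^{k-1}|_{I_p}$, and this is nontrivial because $\tau|_{I_p}$ has finite order while $\kcyc^{k-1}|_{I_p}$ has infinite order for $k \ge 2$ (since $\kcyc|_{I_p}$ surjects onto $\bZ_p^\times$). Hence $\nu|_{I_p}$ has two distinct eigencharacters on its sub and quotient. I will then split into two cases. If $\nu|_{I_p}$ is not semisimple, then $L$ is the only $I_p$-stable line in $V$, so $L$ is $G_p$-stable automatically. If $\nu|_{I_p}$ is semisimple, then there are exactly two $I_p$-stable lines, distinguished by their $I_p$-characters; $G_p$-invariance of $\delta|_{I_p}$ implies that for any $g \in G_p$, the conjugate $gL$ is again an $I_p$-stable line of character $\delta|_{I_p}$, which forces $gL = L$.

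The uniqueness of $\alpha$ will follow by the same dichotomy: every $G_p$-stable line is $I_p$-stable and hence is one of at most two options, and only $L$ gives a quotient with $I_p$-character $\kcyc^{-1}|_{I_p}$, the other (when present) yielding the distinct character $\delta|_{I_p}$. I expect the main obstacle to be the edge case where $\nu|_{I_p}$ is scalar: there, every line is $I_p$-stable, and in principle one could obtain two $G_p$-stable lines (Frobenius eigenspaces after twisting by $\kcyc$) each producing a valid candidate for $\alpha$, which would violate uniqueness. But this degeneracy is exactly excluded by the step $\delta|_{I_p} \ne \kcyc^{-1}|_{I_p}$, so the load-bearing use of the hypotheses $k \ge 2$ and ``$\tau$ of finite order'' is precisely to prevent $\nu|_{I_p}$ from being scalar.
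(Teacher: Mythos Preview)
Your argument is correct and follows essentially the same route as the paper's proof: identify the two $I_p$-constituents, use $k \ge 2$ and the finite order of $\tau$ to see they are distinct, then split into the indecomposable and semisimple cases for $\nu|_{I_p}$. The only cosmetic difference is that the paper packages the semisimple case by invoking Clifford theory (if $\nu|_{G_p}$ were irreducible, its restriction to $I_p$ would consist of conjugate constituents, contradicting that $\alpha_1,\alpha_2$ extend to distinct $G_p$-characters), whereas you argue directly that $gL$ carries the same $I_p$-character as $L$ and hence $gL=L$; these are the same argument unwound.
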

\begin{proof}
Let $\alpha_1=\kcyc^{-1}|_{I_p}$ and $\alpha_2= \tau\kcyc^{k-2}\vert_{I_p}$. By conditions (2)-(3) of Theorem \ref{thm:sw}, we have $\nu|_{I_p}^{ss} \simeq \alpha_1 \oplus \alpha_2$, and $\alpha_1$ is a quotient of $\nu|_{I_p}$. Since $k \ge 2$ and $\tau$ has finite order, we see that $\alpha_1$ and $\alpha_2$ are distinct. Moreover, since they are restrictions of distinct characters of $G_p$, they are not Frobenius conjugate.

The result now follows from Clifford theory, which states that the restriction of an irreducible representation to a normal subgroup is semi-simple and consists of conjugate representations. Indeed, when $\nu\vert_{I_p}$ is indecomposable, $\nu\vert_{G_p}$ must be reducible and indecomposable, and we let $\alpha$ be the quotient character of $\nu\vert_{G_p}$. We see that $\alpha|_{I_p}= \alpha_1$. Otherwise, $\nu\vert_{I_p}$ is semi-simple, and, since $\alpha_1$ and $\alpha_2$ are not conjugate, $\nu\vert_{G_p}$ must be reducible and semi-simple. We let $\alpha$ be the summand of $\nu\vert_{G_p}$ satisfying $\alpha|_{I_p}= \alpha_1$.
\end{proof}

We note that our definition of ordinary in Definition \ref{defn:ord_C-H} depends on the residual pseudorepresentation. Below, we will sometimes write ``$\rho$ is ordinary" to mean ``$\rho$ is ordinary as a deformation of $\psi(\bar{\rho})$". Let $\Db=\psi(\omega^{-1} \oplus \theta^{-1})$ and let $\Db'=\psi(\omega^{-1}\oplus \theta'^{-1})$, where $\theta'=\omega\chi^{-1} = \omega^2 \theta^{-1}$. We are now ready to prove the following.

\begin{lem}
\label{lem:prime}
Let $\rho$ be as in Theorem \ref{thm:sw} and also satisfying condition (iii) of Theorem \ref{thm: our sw}. If $\chi$ is ramified at $p$, then $\rho$ is an ordinary deformation of $\Db$. If $\chi$ is unramified at $p$, then either $\rho$ is an ordinary deformation of $\Db$, or $\rho \otimes \chi$ is an ordinary deformation of $\Db'$. 
\end{lem}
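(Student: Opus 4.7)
The strategy is to translate the hypotheses of Theorem \ref{thm:sw} (as refined in Lemma \ref{lem:SW ord implies p-reducible}) into the framework of Example \ref{eg:ordinary representation}, which characterizes when a representation is ordinary as a deformation of a fixed residual pseudorepresentation. The whole proof is a careful character-tracking argument; there is no serious obstacle.

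First I would apply Lemma \ref{lem:SW ord implies p-reducible} to $\rho$, producing the unique quotient character $\alpha$ of $\rho|_{G_p}$ with $\alpha|_{I_p} = \kcyc^{-1}|_{I_p}$. Since $\bar\rho^{ss} \simeq \omega^{-1} \oplus \theta^{-1}$, the residual character $\bar\alpha$ is a quotient of $\bar\rho|_{G_p}$, hence $\bar\alpha \in \{\omega^{-1}|_{G_p}, \theta^{-1}|_{G_p}\}$ (where these two are distinct by condition (1) of Theorem \ref{thm:sw}). Reducing the identity $\alpha|_{I_p} = \kcyc^{-1}|_{I_p}$ modulo $p$ gives $\bar\alpha|_{I_p} = \omega^{-1}|_{I_p}$. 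Therefore $\bar\alpha = \theta^{-1}|_{G_p}$ is only possible if $\omega^{-1}|_{I_p} = \theta^{-1}|_{I_p}$, i.e., if $\chi = \omega^{-1}\theta$ is unramified at $p$.

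In particular, if $\chi$ is ramified at $p$, then necessarily $\bar\alpha = \omega^{-1}|_{G_p}$, so $\alpha$ meets both conditions of Example \ref{eg:ordinary representation} with respect to $\Db$, giving that $\rho$ is an ordinary deformation of $\Db$. This handles the first assertion. If $\chi$ is unramified at $p$ and $\bar\alpha = \omega^{-1}|_{G_p}$, we likewise conclude $\rho$ is an ordinary deformation of $\Db$.

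The remaining case is $\chi$ unramified at $p$ with $\bar\alpha = \theta^{-1}|_{G_p}$. Here I would twist by $\chi$ and verify directly: since $\chi$ is a Dirichlet character of conductor dividing $Np$ and $\rho$ is ramified only at primes dividing $Np$ (condition (iii)), $\rho \otimes \chi$ factors through $G_{\bQ, S}$. Computing residual characters, $\bar\rho \otimes \bar\chi$ has semisimplification $\omega^{-1}\bar\chi \oplus \theta^{-1}\bar\chi = \omega^{-2}\theta \oplus \omega^{-1} = \theta'^{-1} \oplus \omega^{-1}$, so $\rho \otimes \chi$ is a deformation of $\Db'$. The quotient character $\alpha\chi$ of $(\rho \otimes \chi)|_{G_p}$ satisfies $\overline{\alpha\chi} = \theta^{-1}\bar\chi = \omega^{-1}$ and $(\alpha\chi)|_{I_p} = \alpha|_{I_p} \cdot \chi|_{I_p} = \kcyc^{-1}|_{I_p}$ (using that $\chi|_{I_p} = 1$). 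By Example \ref{eg:ordinary representation}, $\rho \otimes \chi$ is an ordinary deformation of $\Db'$, completing the proof.
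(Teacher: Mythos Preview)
Your argument is correct and follows essentially the same route as the paper: apply Lemma~\ref{lem:SW ord implies p-reducible} to get the quotient character $\alpha$, compare its reduction with $\omega^{-1}\vert_{G_p}$ versus $\theta^{-1}\vert_{G_p}$ using $\bar\alpha\vert_{I_p}=\omega^{-1}\vert_{I_p}$, and in the remaining case twist by $\chi$ and invoke Example~\ref{eg:ordinary representation}. The only cosmetic difference is that in the twisted case the paper re-applies Lemma~\ref{lem:SW ord implies p-reducible} to $\rho'=\rho\otimes\chi$ and then identifies the resulting quotient $\alpha_1'$ with $\alpha\chi$, whereas you verify directly that $\alpha\chi$ already satisfies the two conditions of Example~\ref{eg:ordinary representation}; your shortcut is valid since $\chi\vert_{I_p}=1$.
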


\begin{proof}
We will use the criterion for a representation to be ordinary given in Example \ref{eg:ordinary representation}. By Lemma \ref{lem:SW ord implies p-reducible}, we see that $\rho|_{G_p}$ is reducible. We will label the Jordan-H\"older factors in two different ways. First we write $\rho|_{G_p}^{ss}=\alpha_1 \oplus \alpha_2$ with $\alpha_1|_{I_p}=\kcyc^{-1}|_{I_p}$. By the same lemma, we see that $\alpha_1$ is a quotient of $\rho|_{G_p}$.

We also write $\rho|_{G_p}^{ss}=\alpha_a \oplus \alpha_d$ with $\bar{\alpha}_a=\omega^{-1}$ and $\bar{\alpha}_d=\theta^{-1}$. (The labeling is meant to reflect the coordinate labels $\sm{a}{b}{c}{d}$ of $2\times 2$-matrix and the ordering convention of \eqref{eq: GMA defm}.) We see that if $\alpha_1=\alpha_a$ then $\alpha_1$ is a quotient character such that $\bar{\alpha}_1=\omega^{-1}$ and $\alpha_1|_{I_p}=\kcyc^{-1}|_{I_p}$, so $\rho$ is ordinary by Example \ref{eg:ordinary representation}. Moreover, if $\chi$ is ramified at $p$, we see that $\bar{\alpha}_a|_{I_p} \ne \bar{\alpha}_d|_{I_p}$, and so, since $\bar{\alpha}_1|_{I_p} = \bar{\alpha}_a|_{I_p}$, we must have $\alpha_1=\alpha_a$.

Now assume that $\alpha_1=\alpha_d$ (and so $\chi$ is unramified at $p$), and let $\rho'=\rho\otimes \chi$. Since
$\chi$ is unramified at $p$, one can verify that $\rho \otimes \chi$ satisfies the conditions of Theorem \ref{thm:sw}. We apply Lemma \ref{lem:SW ord implies p-reducible} to $\rho'$, and we get $\rho'|_{G_p}^{ss}=\alpha_1' \oplus \alpha_2'$ with $\alpha_1'$ a quotient and $\alpha_1'|_{I_p}=\kcyc^{-1}|_{I_p}$. Since $\chi$ is unramified at $p$, we must have $\alpha_1'=\alpha_1\chi$. We also write $\rho'|_{G_p}^{ss}=\alpha_a' \oplus \alpha_d'$ with $\bar{\alpha}'_a=\omega^{-1} = \theta^{-1}\chi$ and $\bar{\alpha}'_d=\theta'^{-1} =\omega^{-1}\chi$. We see that $\alpha'_a=\alpha_d\chi$. Hence we have 
\[
\alpha_1'= \alpha_1\chi = \alpha_d \chi = \alpha_a',
\]
so $\rho'$ is ordinary by Example \ref{eg:ordinary representation}.
\end{proof}

The following lemma prepares us to apply Theorem \ref{thm:R=T} to prove Theorem \ref{thm: our sw}. \begin{lem}
\label{lem:AS}
Assume that $\theta$, $N$, and $p$ satisfy the conditions imposed by Theorems \ref{thm:sw} and Theorem \ref{thm: our sw}. Then $\theta$, $N$, and $p$ satisfy the running assumptions of \S\ref{subsec:iwasawa}. Also, if $\chi$ is unramified at $p$, then $\theta'=\omega^2 \theta^{-1}$ satisfies these running assumptions. 
\end{lem}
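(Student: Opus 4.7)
The plan is to verify each of the running assumptions of \S\ref{subsec:iwasawa} directly, in order, from the hypotheses of Theorems \ref{thm:sw} and \ref{thm: our sw}, and then to repeat the verification for $\theta'$ in the second assertion. Most of the work is unpacking definitions.

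First I would dispatch the numerical conditions: $p \geq 5$ is immediate from (i), $p \nmid N$ holds because $N$ is defined as the prime-to-$p$ part of the conductor of $\theta$, and $p \nmid \phi(N)$ is the remainder of (i). Next, to show $\theta$ is even, I would reduce $\det\rho = \tau\kcyc^{k-3}$ modulo $p$ to get $\omega^{-1}\theta^{-1}$; since $\det\rho$ is odd and $\omega$ is odd, evaluating at complex conjugation forces $\theta(c) = 1$.

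For condition (a), the prime-to-$p$ part of the conductor of $\theta$ equals $N$ by construction, so primitivity reduces to showing $\theta$ is ramified at $p$. When $N > 1$ this is exactly (ii); when $N = 1$, $\theta$ is an unramified power of $\omega$, and I would argue (using condition (2) of Theorem \ref{thm:sw} on the shape of $\rho|_{I_p}$) that $\theta \neq 1$. For condition (b) I would split into cases: if $\chi$ is ramified at $p$, the hypothesis $\chi|_{(\Z/p\Z)^\times} = 1$ fails and (b) is vacuous; if $\chi$ is unramified at $p$, then $\chi|_{G_p}$ is determined by $\chi(\mathrm{Frob}_p)$, and the required inequality $\chi|_{(\Z/N\Z)^\times}(p) \neq 1$ translates via local class field theory to $\chi(\mathrm{Frob}_p) \neq 1$, which is equivalent to condition (1) of Theorem \ref{thm:sw}, namely $\theta|_{G_p} \neq \omega|_{G_p}$. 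Condition (c), relevant only when $N = 1$, excludes $\theta = \omega^2$; I would handle it by direct case analysis, ruling out the problematic case using (iv).

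For the second assertion, when $\chi$ is unramified at $p$, I would verify each condition for $\theta' = \omega^2\theta^{-1}$. The central observation is that $\chi' := \omega^{-1}\theta' = \omega\theta^{-1} = \chi^{-1}$, so every condition involving $\chi$ passes symmetrically to the corresponding one for $\chi'$. Moreover, $\chi$ unramified at $p$ forces $\theta|_{I_p} = \omega|_{I_p}$, whence $\theta'|_{I_p} = \omega^2|_{I_p}\cdot\omega^{-1}|_{I_p} = \omega|_{I_p}$ is again nontrivial, so $\theta'$ is ramified at $p$; since its prime-to-$p$ conductor equals $N$, this gives primitivity (a). Conditions (b) and (c) for $\theta'$ then follow by the symmetry $\chi' = \chi^{-1}$ and the already-established conditions for $\theta$.

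The main obstacle will be the edge cases when $N = 1$: excluding $\theta = 1$ (to obtain primitivity) and $\theta = \omega^2$ (for (c)), both of which require combining the ordinarity hypothesis (2) of Theorem \ref{thm:sw} with the class-group assumption (iv) carefully, rather than by a purely formal manipulation of characters.
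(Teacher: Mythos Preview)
Your treatment of the routine verifications---the numerical conditions $p\geq 5$ and $p\nmid N\phi(N)$, the evenness of $\theta$, condition (b), the $N>1$ case of (a), and the argument for $\theta'$---is correct and essentially matches the paper (your explicit check of evenness from the oddness of $\det\rho$ is a detail the paper leaves implicit).

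The genuine gap is exactly where you flagged it: the $N=1$ exclusions $\theta\neq 1$ and $\theta\neq\omega^2$. Your proposed ingredients do not suffice. Assumption (iv) concerns $X_\theta$; for $\theta\in\{1,\omega^2\}$ it merely asserts $X_1=0$ or $X_{\omega^2}=0$, which is a hypothesis, not a contradiction. Condition (2) alone does not rule out $\theta=1$ either: the prescribed shape of $\rho\vert_{I_p}$ is perfectly compatible with $\bar\rho^{ss}\simeq\omega^{-1}\oplus 1$.

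The paper's argument operates on the odd eigenspace $X_\chi$ rather than the even eigenspace $X_\theta$. If $\theta\in\{1,\omega^2\}$ then $\chi\in\{\omega^{-1},\omega\}$, and Stickelberger's theorem (via the Bernoulli-number computation, e.g.\ $B_2=\tfrac{1}{6}$) forces $X_\chi=0$. On the other hand, because $\rho$ is irreducible, Ribet's lattice lemma produces a stable lattice whose reduction $\bar\rho$ is a nonsplit extension with $\omega^{-1}$ as subrepresentation; since $\chi$ is ramified at $p$ in these cases, the ordinary shape of $\rho\vert_{G_p}$ forces $\omega^{-1}$ to also be a quotient of $\bar\rho\vert_{G_p}$, so the extension splits locally at $p$. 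This manufactures a nonzero class in $X_\chi\otimes\bF_p$, contradicting Stickelberger. The two missing ingredients are thus Ribet's lemma and Stickelberger's theorem, and the crucial shift is from $X_\theta$ to $X_\chi$.
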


\begin{proof}
The conditions $p \nmid N \phi(N)$ and $p \geq 5$ imposed in \S\ref{subsec:iwasawa} are implied by condition (i) of Theorem \ref{thm: our sw} and the definition of $N$ as the tame level of $\theta$. 

As we have already noted, $\theta$ and $\theta'$ satisfy condition (1) of Theorem \ref{thm:sw}, which is equivalent to condition (b) of \S\ref{subsec:iwasawa}. Thus it remains to verify conditions (a) and (c). 

Suppose that $N=1$. We claim that the existence of $\rho$ implies that $\theta$ cannot be $1$ or $\omega^2$. Indeed, in these cases, $\chi = \omega^{\pm 1}$, and Stickelberger's theorem implies that $X_\chi = 0$ (see e.g.~\cite[Prop.\ 6.16 and Thm.\ 6.17, pg.\ 102]{washington1982}, and note that $B_2=\frac{1}{6}$). However, \cite[Prop.\ 2.1]{ribet1976} implies that the irreducible representation $\rho$ leaves stable a lattice such that the resulting residual representation $\bar{\rho}$ is not diagonalizable and has $\omega^{-1}$ as a subrepresentation.  Moreover, because $\rho$ satisfies the conditions of Theorem \ref{thm:sw}, and since $\chi$ is ramified at $p$, the argument of Lemma \ref{lem:prime} implies that $\bar{\rho}|_{G_p}$ has $\omega^{-1}$ as a quotient representation. This implies that $\bar{\rho}$ is a non-trivial extension of $\theta^{-1}$ by $\omega^{-1}$ that is split upon restriction to $G_p$. This extension gives rise to a non-zero element of $X_{\chi} \otimes_{\Lam} \F_p$, so $X_{\chi} \ne 0$, a contradiction. Hence, if $N=1$, then $\theta \ne 1$ and $\theta$ is primitive (so (a) is true), and $\theta \neq \omega^2$ (so (c) is true). 

Now suppose $N > 1$, making (c) satisfied. Assumption (i) implies that our running assumption that $p \nmid N\phi(N)$ is satisfied. We have that $\theta$ is primitive of modulus either $N$ or $Np$ by definition of $N$. Assumption (ii) rules out the case that $\theta$ is primitive of modulus $N$, so (a) holds. 

When $\chi$ is unramified at $p$, we wish to show that $\theta'$ satisfies the running assumptions. Since $\chi$ is odd, the assumption that $\chi$ is unramified at $p$ implies that $N>1$. As $\theta'=\omega^{2}\theta^{-1}$, the conductor of $\theta'$ is either $N$ or $Np$. Since $\chi'=\chi^{-1}$ is unramified at $p$, this implies that $\theta'=\chi'\omega$ is ramified at $p$, making its conductor $Np$ and satisfying assumption (a). 
\end{proof}

\begin{proof}[Proof of Theorem \ref{thm: our sw}] In Lemma \ref{lem:AS} we checked that the running assumptions of the paper about $\theta$, $\theta'$, $N$, and $p$ are satisfied. By Lemma \ref{lem:prime}, we can break the proof into two cases. 

\textbf{Case 1: $\rho$ is ordinary.} We consider the case where $\rho$ is ordinary. In this case $\psi(\rho)$ is an ordinary pseudorepresentation, because $\rho$ is an ordinary (GMA) representation inducing it. Therefore there is a unique map $\nu: R^\ord_\Db \ra F$ corresponding to $\psi(\rho)$. Assumption (iv) allows us to apply Theorem \ref{thm:R=T} so that $\phi: R^\ord_\Db \risom \fH$. Then the ordinary $p$-adic modular eigenform $f$ determined by $\nu \circ \phi^{-1} : \fH \ra F$ satisfies $\psi(\rho_f) = \psi(\rho)$, which implies $\rho_f \simeq \rho$ since $\rho$ is irreducible. Condition (3) implies that this modular form has weight $k \in \bZ_{\geq 2}$; consequently, $f$ is classical by \cite[Thm.\ I]{hida1986}, and $\rho$ is modular. 

\textbf{Case 2: $\chi$ is unramified at $p$ and $\rho'$ is ordinary.} We want to apply Theorem \ref{thm:R=T} to $\Db'$ now. Assumption (iv) allows us to apply Theorem \ref{thm:R=T}, so that there is an isomorphism $\phi' : R_{\Db'} \risom \fH'$, where $\fH'$ is $\fH$ for ${\theta'}$. Then we have a unique map $\nu': R^\ord_{\Db'} \ra F$ corresponding to $\psi(\rho')$, and the rest of the argument is the same as above. 
\end{proof}

\begin{rem}
We see in the proof that for each $\rho$, only one of the two groups $X_\theta$ and $X_{\theta^{-1}\omega^2}$ must be zero. Moreover, if $\chi$ is ramified at $p$, we can be certain that it is $X_\theta$ that must be $0$. 

Before \cite{SW1999}, Skinner and Wiles gave a different proof of the modularity of $\rho$ under different hypotheses \cite{SW1997}. Among their assumptions is that $X_\chii = 0$. This is equivalent to $\X_\theta=0$ by the reflection principle, and so it is a much stronger assumption than our assumption that $X_\theta=0$. In this way, Theorem \ref{thm: our sw} may be seen as an improvement of the method of \cite{SW1997}.

\end{rem}

\bibliographystyle{alpha}
\bibliography{CWEbib-0915}

\def\cprime{$'$} \def\Dbar{\leavevmode\lower.6ex\hbox to 0pt{\hskip-.23ex
  \accent"16\hss}D} \def\cfac#1{\ifmmode\setbox7\hbox{$\accent"5E#1$}\else
  \setbox7\hbox{\accent"5E#1}\penalty 10000\relax\fi\raise 1\ht7
  \hbox{\lower1.15ex\hbox to 1\wd7{\hss\accent"13\hss}}\penalty 10000
  \hskip-1\wd7\penalty 10000\box7}
  \def\cftil#1{\ifmmode\setbox7\hbox{$\accent"5E#1$}\else
  \setbox7\hbox{\accent"5E#1}\penalty 10000\relax\fi\raise 1\ht7
  \hbox{\lower1.15ex\hbox to 1\wd7{\hss\accent"7E\hss}}\penalty 10000
  \hskip-1\wd7\penalty 10000\box7}
\begin{thebibliography}{WWE15}

\bibitem[BC09]{BC2009}
Jo{\"e}l Bella{\"{\i}}che and Ga{\"e}tan Chenevier.
\newblock Families of {G}alois representations and {S}elmer groups.
\newblock {\em Ast\'erisque}, (324):xii+314, 2009.

\bibitem[BH93]{BH1993}
Winfried Bruns and J{\"u}rgen Herzog.
\newblock {\em Cohen-{M}acaulay rings}, volume~39 of {\em Cambridge Studies in
  Advanced Mathematics}.
\newblock Cambridge University Press, Cambridge, 1993.

\bibitem[dSRS97]{lci}
Bart de~Smit, Karl Rubin, and Ren{\'e} Schoof.
\newblock Criteria for complete intersections.
\newblock In {\em Modular forms and {F}ermat's last theorem ({B}oston, {MA},
  1995)}, pages 343--356. Springer, New York, 1997.

\bibitem[FK12]{FK2012}
Takako Fukaya and Kazuya Kato.
\newblock On conjectures of {S}harifi.
\newblock Preprint, 2012.

\bibitem[FKS14]{fks2014}
Takako Fukaya, Kazuya Kato, and Romyar Sharifi.
\newblock Modular symbols in {I}wasawa theory.
\newblock In {\em Iwasawa Theory 2012}, volume~7 of {\em Contributions in
  Mathematical and Computational Sciences}, pages 177--219. Springer-Verlag,
  Berlin Heidelberg, 2014.

\bibitem[FW79]{FW1979}
Bruce Ferrero and Lawrence~C. Washington.
\newblock The {I}wasawa invariant {$\mu _{p}$} vanishes for abelian number
  fields.
\newblock {\em Ann. of Math. (2)}, 109(2):377--395, 1979.

\bibitem[Hid86]{hida1986}
Haruzo Hida.
\newblock Galois representations into {${\rm GL}_2({\bf Z}_p[[X]])$} attached
  to ordinary cusp forms.
\newblock {\em Invent. Math.}, 85(3):545--613, 1986.

\bibitem[Jan89]{jannsen1989}
Uwe Jannsen.
\newblock Iwasawa modules up to isomorphism.
\newblock In {\em Algebraic number theory}, volume~17 of {\em Adv. Stud. Pure
  Math.}, pages 171--207. Academic Press, Boston, MA, 1989.

\bibitem[LS13]{LS2013}
Meng~Fai Lim and Romyar~T. Sharifi.
\newblock Nekov\'a\v r duality over {$p$}-adic {L}ie extensions of global
  fields.
\newblock {\em Doc. Math.}, 18:621--678, 2013.

\bibitem[Nek06]{nekovar2006}
Jan Nekov{\'a}{\v{r}}.
\newblock Selmer complexes.
\newblock {\em Ast\'erisque}, (310):viii+559, 2006.

\bibitem[NSW08]{NSW2008}
J{\"u}rgen Neukirch, Alexander Schmidt, and Kay Wingberg.
\newblock {\em Cohomology of number fields}, volume 323 of {\em Grundlehren der
  Mathematischen Wissenschaften [Fundamental Principles of Mathematical
  Sciences]}.
\newblock Springer-Verlag, Berlin, second edition, 2008.

\bibitem[Oht05]{ohta2005}
Masami Ohta.
\newblock Companion forms and the structure of {$p$}-adic {H}ecke algebras.
\newblock {\em J. Reine Angew. Math.}, 585:141--172, 2005.

\bibitem[Rib76]{ribet1976}
Kenneth~A. Ribet.
\newblock A modular construction of unramified {$p$}-extensions of{$Q(\mu
  _{p})$}.
\newblock {\em Invent. Math.}, 34(3):151--162, 1976.

\bibitem[Sha07]{sharifi2007}
Romyar~T. Sharifi.
\newblock Iwasawa theory and the {E}isenstein ideal.
\newblock {\em Duke Math. J.}, 137(1):63--101, 2007.

\bibitem[Sha09]{sharifi2009}
Romyar~T. Sharifi.
\newblock Cup products and {$L$}-values of cusp forms.
\newblock {\em Pure Appl. Math. Q.}, 5(1):339--348, 2009.

\bibitem[Sha11]{sharifi2011}
Romyar Sharifi.
\newblock A reciprocity map and the two-variable {$p$}-adic {$L$}-function.
\newblock {\em Ann. of Math. (2)}, 173(1):251--300, 2011.

\bibitem[SW97]{SW1997}
C.~M. Skinner and A.~J. Wiles.
\newblock Ordinary representations and modular forms.
\newblock {\em Proc. Nat. Acad. Sci. U.S.A.}, 94(20):10520--10527, 1997.

\bibitem[SW99]{SW1999}
C.~M. Skinner and A.~J. Wiles.
\newblock Residually reducible representations and modular forms.
\newblock {\em Inst. Hautes \'Etudes Sci. Publ. Math.}, (89):5--126 (2000),
  1999.

\bibitem[Wak15a]{wake2}
Preston Wake.
\newblock Eisenstein {H}ecke algebras and conjectures in {I}wasawa theory.
\newblock {\em Algebra Number Theory}, 9(1):53--75, 2015.

\bibitem[Wak15b]{wake1}
Preston Wake.
\newblock Hecke algebras associated to {$\Lambda$}-adic modular forms.
\newblock {\em J. Reine Angew. Math.}, 700:113--128, 2015.

\bibitem[Was82]{washington1982}
Lawrence~C. Washington.
\newblock {\em Introduction to cyclotomic fields}, volume~83 of {\em Graduate
  Texts in Mathematics}.
\newblock Springer-Verlag, New York, 1982.

\bibitem[WE15]{WE2015}
Carl Wang~Erickson.
\newblock Algebraic families of {G}alois representations and potentially
  semi-stable pseudodeformation rings.
\newblock To appear in \textit{Math. Ann}. arXiv:1501.05629v2 [math.NT], 2015.

\bibitem[Wei94]{weibel1994}
Charles~A. Weibel.
\newblock {\em An introduction to homological algebra}, volume~38 of {\em
  Cambridge Studies in Advanced Mathematics}.
\newblock Cambridge University Press, Cambridge, 1994.

\bibitem[Wil95]{wiles1995}
Andrew Wiles.
\newblock Modular elliptic curves and {F}ermat's last theorem.
\newblock {\em Ann. of Math. (2)}, 141(3):443--551, 1995.

\bibitem[WWE15]{WWE2015}
Preston Wake and Carl Wang~Erickson.
\newblock Pseudo-modularity and {I}wasawa theory.
\newblock arXiv:1505.05128v2 [math.NT], 2015.

\end{thebibliography}

\end{document}